\newtheorem{theorem}{Theorem}[section]
\newtheorem{lemma}{Lemma}[subsection]
\newtheorem{proposition}{Proposition}[section]
\newtheorem*{theorem*}{Theorem}
\newcommand{\half}{\frac{1}{2}}
\newcommand{\be}{\begin{eqnarray}}
\newcommand{\ee}{\end{eqnarray}}
\newcommand{\ben}{\begin{eqnarray*}}
\newcommand{\een}{\end{eqnarray*}}
\DeclareMathOperator{\dist}{dist}
\DeclareMathOperator{\supp}{supp}
\DeclareMathOperator{\diam}{diam}
\DeclareMathOperator{\pds}{{p_{\it d}}sin}
\DeclareMathOperator{\SCale}{{scale}}
\def\Supp{{\supp(\mu)}}
\def\eps{\epsilon}
\def\agh{{\mathrm{AG}_{d}(H)}}
\newcommand{\reals}{\mathbb R}
\newcommand{\nats}{\mathbb N}
\def\hmin{{c_{\mathrm{ht}}}}
\def\supp{{\mathrm{supp}}}
\def\Max0{{\mathrm{max}_{x_0}}}
\def\Min0{{\mathrm{min}_{x_0}}}
\def\cdes{{c_{\mathrm{Dsh}}}}
\def\cdesh2{{c^2_{\mathrm{Dsh}}}}
\def\cDn{{c_{\mathrm{vol,Dsh}}}}
\def\cDnn2{{c^2_{\mathrm{vol,Dsh}}}}
\def\cvolk{{c_{\mathrm{vol},\mu}}}
\def\calcvolk{{{\cal C}_{\mathrm{vol}}}}
\def\cvolker{{c^2_{\mathrm{vol},\mu}}}
\def\cvol{{c_{\mathrm{vol}}}}
\def\cpols{{c_{\mathrm{pol}}}}
\def\cpol2{{c^2_{\mathrm{pol}}}}
\def\cdlss{{c_{\mathrm{dls}}}}
\def\cdls2{{I_{\mathrm{dls}}}}
\def\cdls2{{c^2_{\mathrm{dls}}}}
\def\xcm{{x_{\mathrm{cm}}}}
\def\MM{{\mathrm{M}}}
\def\Tbe{{\mathrm{T_{ube}}}}
\newcommand{\di}{\, \mathrm{d}}
\newcommand{\fdi}{\mathrm{d}}
\newcommand{\latop}[2]{\substack{{#1}\\{#2}}} 
\providecommand{\perm}[2]{\mathrm{P}(#1,#2)}
\begin{document}

\title{Least Squares Approximations of Measures via Geometric Condition Numbers\thanks{This work has been supported
by NSF awards DMS-0612608, DMS-0915064 and DMS-0956072. The authors thank the reviewer for many helpful comments and suggestions and
Alin Pogan and Teng Zhang for some helpful discussions (references~\cite{halmos_book78} and~\cite{yafaev_book92} were suggested by Alin).}}
\author{Gilad Lerman\thanks{Contact:lerman@umn.edu, jonathan.t.whitehouse@vanderbilt.edu} \
and J. Tyler Whitehouse$\hspace{0.01cm}^{\dag}$
}

\maketitle
\begin{abstract}
For a probability measure $\mu$ on a real separable Hilbert space $H$, we are interested in ``volume-based'' approximations of the $d$-dimensional least squares error of $\mu$, i.e., least squares error with respect to a best fit $d$-dimensional affine subspace. Such approximations are given by averaging real-valued multivariate functions which are typically scalings of squared $(d+1)$-volumes of $(d+1)$-simplices in $H$.
Specifically, we show that such averages are comparable to the square of the
$d$-dimensional least squares error of $\mu$, where the comparison depends on a simple quantitative
geometric property of $\mu$.
This result is a higher dimensional generalization of the elementary
fact that the double integral of the squared distances between points is proportional to the variance of
$\mu$. We relate our work to two recent algorithms, one for clustering affine subspaces and the other for Monte-Carlo SVD based on volume sampling.

\end{abstract}
\section{Introduction}

Our setting includes a real
separable Hilbert space $H$ (with dot product $\langle \cdot,\cdot \rangle$ and induced
norm $\Vert \cdot \Vert$), a Borel probability measure $\mu$ on
$H$ and a fixed intrinsic dimension $d \in \nats$. We  assume
that the support of $\mu$ is bounded. Let $\agh$ denote the affine Grassmannian
on $H$, that is, the set of all $d$-flats (i.e., $d$-dimensional affine subspaces) in $H$.
The $d$-dimensional
least squares (LS) error for $\mu$ is
\be \label{eq:def_e2}
e_2(\mu,d)=\inf_{L \in \agh}\sqrt{\int\dist^2(x,L)\di\mu(x)},\ee
where $\dist(x,L)$ denotes the distance of $x\in H$ to $L$.

We form functions $c: H^{d+2} \rightarrow \reals$, whose integrals approximate $e^2_2(\mu,d)$.
Denoting an arbitrary element of $H^{d+2}$ by $X=(x_0,\ldots,x_{d+1})$ and viewing it as a $(d+1)$-simplex in $H$, we
express the desired comparison as follows.
\be \label{eq:typical_result} e^2_2(\mu,d) \approx
\int_{H^{d+2}}c^2(X)\di\mu^{d+2}(X)\ee
(i.e., the ratios of the LHS and RHS of~\eqref{eq:typical_result} are bounded by constants, which may depend on $\mu$).
Some of these functions are obtained by appropriate scaling of $(d+1)$-volumes.
We denote by $\MM_{d+1}(X)$ the $(d+1)$-volume of
any of the parallelotopes generated
by the vertices of $X$. We also denote the diameter of $X$ by $\diam(X)$, i.e., the maximal edge length.
An example of such  a function $c$ is obtained by scaling $\MM_{d+1}(X)$ by a power of the diameter, i.e.
\begin{equation}\label{volume-1}\cvol(X)=\frac{\MM_{d+1}(X)}{\diam^d(X)}.\end{equation}
We refer to such functions as geometric condition numbers (GCNs), since they measure the geometric conditioning of the simplex $X$ by
a quantity that scales like the diameter of the simplex. The smaller they are the flatter, i.e., better-conditioned, the simplex is.

When $d=0$, \eqref{eq:typical_result} reduces to an elementary though useful identity, which we exemplify for the GCN $\cvol$.
In this
case, the best approximating $0$-flat (i.e., best approximating point) is
the mean, $\int x \di \mu$, and $e_2^2(\mu,0)$
is the variance of $\mu$, that is,
$$
e_2^2(\mu,0) = \int \left\| x - \int x \di \mu(x) \right\|^2 \fdi \mu(x) \equiv \frac{1}{2}
\int \|x_1-x_2\|^2 \di \mu(x_1) \di \mu(x_2).
$$
Moreover, %
$$
\cvol(x_1,x_2) = \|x_1-x_2\|
$$
and consequently,
\be \label{eq:elementary_identity} e_2^2(\mu,0)
=\frac{1}{2}
\int \cvol^2(x_1,x_2) \di \mu(x_1) \di \mu(x_2). \ee
Since our GCNs (of $d+2$ variables) are constant multiples of the
pairwise distance when $d=0$, this identity extends to all of them (with possibly a different multiplicative constant).

This paper  generalizes~\eqref{eq:elementary_identity} to higher
dimensional approximations and obtains estimates like~\eqref{eq:typical_result} for various GCNs.
This generalization restricts the type of measure $\mu$ by various conditions (depending on the GCN).
Our weakest condition, which we refer to as $d$-separation tries to avoid the concentration of $\mu$ around a subspace of dimension lower than $d$
(see Section~\ref{sec:d_separated} for precise definition).

This investigation is partly motivated by the analysis of a recent spectral clustering method for data sampled from multiple subspaces~\cite{spectral_theory, spectral_applied}.
The goodness of clustering for this method depends on the averaged GCN within each cluster and the theory developed here interprets this dependence in terms of the $d$-dimensional LS errors within clusters. We also  relate our study to some aspects of volume-based sampling for fast SVD~\cite{DRVW06,DV06}.

Many of our techniques are rooted in the theory of uniform
rectifiability~\cite{DS91}. In particular, notions similar to the $d$-separation condition have appeared before for $d$-regular or upper $d$-regular measures (see Section~\ref{sec:lower_bound} for their definitions) in~\cite[Lemma 5.8]{DS91}, \cite[Lemma 2.3]{Leger99}, \cite[Lemma~8.2]{tolsa_riesz_rect} and~\cite[Proposition~3.1]{LW-part2}.
Moreover, differently scaled functions of $d+2$ variables, referred to as discrete curvatures, were studied in~\cite{MMV96, Leger99, LW-part1, LW-part2} for $d$-regular measures. For example, while $\MM_{d+1}(X)$ is scaled by $\diam^d(X)$ to produce the geometric condition number $\cvol$, it can be scaled differently to obtain the following discrete curvature:
\begin{equation}\label{curv_volume-2}\calcvolk(X)=\frac{\MM_{d+1}(X)}{\diam^{(d+1)^2}(X)}\,.\end{equation}
It follows from~\cite{LW-part1, LW-part2} that for $d$-regular measures the integral of $\calcvolk^2$ is comparable to the Jones-type flatness, which adds up appropriately normalized $d$-dimensional LS errors of certain balls of different radii centered at different locations.
Another type of scaling of $\MM_{d+1}$ (or more precisely, an equivalent variant of it) appeared in~\cite{Strzelecki:1221693} for exploring different geometric properties of the underlying measure.

\subsection{Structure of This Paper and Additional Results}

In Section~\ref{sec:context} we introduce notational
conventions. In Section~\ref{sec:ls_d_flat} we verify the existence of a LS
$d$-flat minimizing the error $e_2(\mu,d)$ and construct it in terms of
the singular value decomposition of a special operator,
which we refer to as the data-to-features operator.
In Section~\ref{sec:more-LS-curvatures} we introduce $d$-dimensional GCNs of $d+2$ variables, in addition to $\cvol$.
Section~\ref{sec:upper_bound} controls $e_2^2(\mu,d)$ from above by integrals of these GCNs, whereas
Section~\ref{sec:lower_bound} bounds $e_2^2(\mu,d)$ from below by these integrals and thus concludes the desired comparisons.
In Section~\ref{sec:gcn_reduce}, we form $d$-dimensional GCNs of both $d+1$ and $d$ variables, and we establish their comparisons.
We also relate there our work to
that of Deshpande et al.~\cite{DRVW06, DV06}.
Section~\ref{sec:stat} puts this work in a statistical context by relating our results to clustering affine subspaces as well as extending some of the previous comparisons with high probability to the corresponding empirical quantities estimated from i.i.d.~samples from $\mu$.
We discuss further implications and possible extensions in Section~\ref{sec:discuss}.

\section{Notational Conventions}
\label{sec:context}

\subsection{Comparisons}

For real-valued functions $f$ and $g$, we let $f \lessapprox g$ denote the existence of  $C>0$ such that $f \leq C \cdot g$.
Similarly, $f \approx g$ if $f
\lessapprox g$ and $g \lessapprox f$. The constants
may depend on some arguments of $f$ and $g$, which we indicate if
they are unclear from the context.

\subsection{Simplices}

Fixing $n \in \nats$, $n \geq 2$, we represent
$n$-simplices in $H$ by
ordered $(n+1)$-tuples of the product space,
$H^{n+1}$. We denote an element of
$H^{n+1}$ by $X=(x_0,\ldots,x_{n})$ and for $0 \leq i \leq n$:  $(X)_i=x_i$ denotes the projection of $X$ onto its
$i^{\text{th}}$ $H$-valued coordinate (or vertex).
For $0\leq i<j\leq n$, $y,z\in H$ and $X \in H^{n+1}$ as above, we form the following elements:
\begin{equation}\label{eq:def-removal1}
X(i)=(x_0,\ldots, x_{i-1},x_{i+1},\ldots,x_{n}),
\end{equation}
\begin{equation}\label{eq:def-removal2}
X(y,i)=(x_0,\ldots,x_{i-1},y,x_{i+1},\ldots,x_{n}),
\end{equation}

The  minimal edge
length of $X$ is denoted by $\min(X)$.
We define the following quantities of $X$ with respect to its
zeroth coordinate $x_0$:
\begin{equation}\Max0(X)=\max_{1\leq j\leq n}\|x_j-x_0\|\ \textup{
and }\ \Min0(X)=\min_{1\leq j\leq n}\|x_j-x_0\|.\end{equation}
For $X$ such that $\min(X)\not=0$, let
\be\SCale_{x_0}(X)=\frac{\Min0(X)}{\Max0(X)}.\label{eq:def_scale}\ee
We say that a simplex $X$ is well-scaled at $x_0$ (for $\lambda>0$)
if $\min(X)>0$ and $\SCale_{x_0}(X)\geq \lambda$.

We let $L[X]$ denote the affine subspace
of $H$ of minimal dimension containing the vertices of $X$.
We recall that for $n \in \nats$, $\MM_{n}(X)$ is the $n$-volume of any of
the parallelotopes generated by the vertices of $X$.
We note that
\begin{equation}\label{product-formula}\MM_{n}(X)=\dist(x_i,L[X(i)])\cdot\MM_{n-1}(X(i)) \ \text{ for all }
0\leq i\leq d+1.\end{equation}

\section{Least Squares $d$-Flats and Their Construction}
\label{sec:ls_d_flat}

Formally, a LS $d$-flat for $\mu$ is a $d$-flat $L\in \agh$, for which the RHS of~\eqref{eq:def_e2}
obtains its minimal value. We show here that such $d$-flats exist, i.e., the function
\be\label{eq:def_FL} F(L) = \int\mbox{dist}^2(x,L)\di \mu (x) \,  \ee
obtains its minimum among all $d$-flats $L$ in $\agh$. Moreover we show how
to construct a LS $d$-flat given the singular value decomposition (SVD) of the data-to-features operator
described next.

\subsection{The Data-to-Features Operator}
\label{subsec:data_to_feature}
We define the center of mass of $\mu$, $\xcm$, by
\be
\label{eq:def_cm}
\xcm \equiv \xcm(\mu)=\int x\di \mu(x)
\ee
and denote by $L_2(\mu)$ the set of functions $f:H \to \reals$ such that
$\int | f(x)|^2 \di \mu(x) < \infty$.
The data-to-features operator $A_\mu :H \to L_2(\mu)$ is
\be \label{eq:data_to_feature} (A_\mu y)(x) = \langle y,x-\xcm \rangle \ \mbox{for
all}\ x,y \in H . \ee
We use the name ``data-to-features'' operator since if $\mu$ is an
atomic measure supported on $N$ ``data points'' in $H = {\rm R}^D$,
then $A_\mu$ is represented by an $N\times D$ matrix whose rows are
the data points, shifted by their center of mass. Therefore, in this case
$A_\mu$ maps data points in ${\reals}^D$ into $N$-dimensional feature
vectors (containing coefficients according to the dictionary of
shifted data points).
We remark that the dependence of $A_\mu$ on $\mu$ is not only due to the use of
$\xcm$, but also because the range of $A_\mu$ is in $L_2(\mu)$.

Next, we specify a kernel associated with $A_\mu$ and use it to conclude that $A_\mu$ is Hilbert-Schmidt.
Let us arbitrarily fix an orthonormal basis of $H$, $\{e_n\}_{n \in \nats}$, and express $A_\mu$ as follows:
\be
\label{eq:kernel_for_A}
(A_\mu y)(x) =\sum_{n \in \nats} \langle y,e_n \rangle \langle e_n,x-\xcm \rangle \ \mbox{for
all}\ x,y \in H .
\ee
We can thus view it as operator from $\ell_2$ (with the counting measure $\mu_{\sharp}$) to $L_2(\mu)$ with the kernel $k(x,n)= \langle e_n,x-\xcm \rangle$.
We note that this kernel is in $L_2(\mu_{\sharp} \times \mu)$, indeed, using the fact that the support of $\mu$ is bounded we obtain that
\be
\int \sum_{n \in \nats} |\langle e_n,x-\xcm \rangle |^2 \di \mu(x) = \int \|x-\xcm\|^2 \di \mu(x) < \infty.
\ee
We thus conclude that $A_\mu$ is Hilbert-Schmidt and in particular compact
(see e.g., \cite[Section~4]{halmos_book78}).

Since $A_\mu$ is compact, we can apply its SVD~\cite[Section~1.6.2]{yafaev_book92}.
We denote the singular values of $A_\mu$ repeated according
to multiplicities by $\{\sigma_i\}_{i \in \nats}$ .
Their  corresponding right vectors are denoted by $\{ v_i \}_{i \in \nats}$.
Equivalently, these are the orthonormal eigenvectors of $A^*_\mu A_\mu$ ($A^*_\mu$ is the adjoint
of $A_\mu$) with
eigenvalues $\{\sigma_i^2\}_{i \in \nats}$.
In Section~\ref{sec:pca} we apply the finiteness of $\sum_{i \in \nats} \sigma_i^2$,
which is equivalent to the Hilbert-Schmidt property of $A_\mu$.

\subsection{Least Squares $d$-Flats by SVD of the Data-to-Features Operator}\label{sec:pca}
We use the SVD of $A_\mu$ to construct a LS $d$-flat and express its corresponding error as
follows:

\begin{proposition}
\label{prop:pca}
A LS $d$-flat for $\mu$ exists and is obtained by
$$
\xcm + {\rm Sp}\{v_1,\ldots ,v_d\},
$$
where $v_1,\ldots ,v_d$ are the top right vectors of the
data-to-features operator $A_\mu$. It is unique if and only if
$\sigma_d>\sigma_{d+1}$. Moreover,
\be \label{eq:ls_error}
e_2(\mu,d) = \sqrt{\sum_{i > d} \sigma_i^2} \,. \ee
\end{proposition}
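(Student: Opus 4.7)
The plan is to reduce the minimization of $F(L)$ over $\agh$ to the Ky Fan / min-max principle for the compact positive self-adjoint operator $A_\mu^* A_\mu$. First, I would parametrize any $d$-flat as $L = y + V$, where $V$ is a $d$-dimensional linear subspace of $H$ and $y \in H$, and show that the optimum can be taken with $y = \xcm$. Writing $\dist(x,L) = \|P_{V^\perp}(x-y)\|$ and splitting $x - y = (x - \xcm) + (\xcm - y)$, the cross term integrates to zero because $\int (x - \xcm)\di\mu(x) = 0$, yielding
\[
F(y + V) \,=\, \int \|P_{V^\perp}(x - \xcm)\|^2 \di\mu(x) \,+\, \|P_{V^\perp}(\xcm - y)\|^2.
\]
The second summand is minimized (to zero) by $y = \xcm$, so the problem reduces to minimizing $G(V) := \int \|P_{V^\perp}(x - \xcm)\|^2 \di\mu(x)$ over $d$-dimensional linear subspaces $V \subseteq H$.

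Next, I would translate $G(V)$ into spectral data of $A_\mu^* A_\mu$. Picking any orthonormal basis $u_1, \ldots, u_d$ of $V$ and using $\|x - \xcm\|^2 = \|P_V(x - \xcm)\|^2 + \|P_{V^\perp}(x - \xcm)\|^2$, I obtain
\[
G(V) \,=\, \int \|x - \xcm\|^2 \di\mu(x) \,-\, \sum_{i=1}^d \int \langle u_i, x - \xcm\rangle^2 \di\mu(x) \,=\, \int \|x - \xcm\|^2 \di\mu(x) \,-\, \sum_{i=1}^d \langle A_\mu^* A_\mu u_i, u_i\rangle.
\]
The first summand equals $\trace(A_\mu^* A_\mu) = \sum_{i \in \nats} \sigma_i^2$, which is finite by the Hilbert-Schmidt property verified in Section~\ref{subsec:data_to_feature}. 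Thus minimizing $G(V)$ is equivalent to maximizing $\sum_{i=1}^d \langle A_\mu^* A_\mu u_i, u_i\rangle$ over orthonormal $d$-tuples in $H$.

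Finally, I would invoke Ky Fan's maximum principle for the compact self-adjoint operator $A_\mu^* A_\mu$: the supremum of $\sum_{i=1}^d \langle A_\mu^* A_\mu u_i, u_i\rangle$ is $\sum_{i=1}^d \sigma_i^2$, is attained by $u_i = v_i$, and the maximizing $d$-dimensional subspace is unique if and only if $\sigma_d > \sigma_{d+1}$. Subtracting from $\sum_{i \in \nats} \sigma_i^2$ yields both the claimed LS $d$-flat $\xcm + \Sp\{v_1, \ldots, v_d\}$ and the identity $e_2^2(\mu, d) = \sum_{i > d} \sigma_i^2$.

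The only delicate point in infinite-dimensional $H$ is whether the Ky Fan maximum is actually attained rather than merely approached; this is precisely what compactness of $A_\mu^* A_\mu$ (equivalent to $A_\mu$ being Hilbert-Schmidt) guarantees, which is why Section~\ref{subsec:data_to_feature} invested effort in verifying this before stating the proposition. Everything else is a routine bookkeeping exercise, with the mean-centering step and the orthogonality identity $\int (x - \xcm)\di\mu(x) = 0$ serving as the only nontrivial algebraic manipulations.
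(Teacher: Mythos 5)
Your proposal follows essentially the same route as the paper: center at $\xcm$ via the vanishing cross term, reduce to maximizing the sum of Rayleigh quotients of $A_\mu^*A_\mu$ over orthonormal $d$-tuples (the paper writes this as $\trace(P_V A_\mu^* A_\mu P_V^*)$, an equivalent formulation), and invoke Ky Fan together with compactness to conclude attainment, the uniqueness criterion, and the error formula. Your explicit remark about why attainment requires compactness in infinite dimensions is a useful clarification but does not change the substance.
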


\begin{proof}
We express the function $F(L)$ of~\eqref{eq:def_FL} in terms of a
shift vector $c \in H$, a linear subspace $V \subseteq H$ and also
in terms of the orthogonal projection of $H$ onto the orthogonal
complement of $V$, which we denote by $P^\bot_V$. That is,
\begin{equation}
F(L) \equiv F(c,V) =  \int \mbox{dist}^2(x,c+V) \di \mu (x) =
\int\Vert P^\bot_V(x-c)\Vert^2\di \mu (x).
\end{equation}
We further note that
\be
F(c,V) =  \int\Vert P^\bot_V(x-\xcm)\Vert^2 \di \mu (x) + \Vert
P^\bot_V(c-\xcm)\Vert^2 \di \mu (x).
\ee
We thus conclude that the vector $c = \xcm$ minimizes $F(c,V)$
independently of $V$ (more generally, the set of minimizers is $\xcm
+ V$).

We next note that
\be
\min_{V} \int\Vert P^\bot_V(x-\xcm)\Vert^2\di \mu (x) =
\int\Vert x-\xcm \Vert^2\di \mu (x) - \max_V \int\Vert
P_V(x-\xcm)\Vert^2 \di \mu (x)\,,
\ee
where $P_V$ is the projection operator of $H$ onto $V$. Therefore,
instead of minimizing $F(\xcm,V)$, we maximize the function
\be
\label{eq:def_G}
G(V) = \int\Vert P_V(x-\xcm)\Vert^2 \di \mu (x) = \mbox{trace} (P_V
A^*_\mu A_\mu P^*_V) \,.
\ee
The last equality in~\eqref{eq:def_G} is evident due to the following expression of the adjoint operator $A_\mu^* : L_2(\mu) \to H$:
\be
\label{eq:adjoint}
A^*_\mu f = \int (x-\xcm)f(x)\di \mu (x) \ \mbox{for all} \ f\in
L_2(\mu).
\ee
Indeed, if $\{e_n\}_{n=1}^{\dim(V)}$ is an orthonormal basis of $V$ and $1 \leq n \leq \dim(V)$, then
\be
\langle e_n, P_V
A^*_\mu A_\mu P^*_V e_n \rangle = \int \langle e_n,x-\xcm \rangle^2 \di \mu(x).
\ee
Thus, summing both the LHS and RHS over $n=1, \ldots, \dim(V)$ we obtain the desired equality.

At last, we apply a theorem by Ky-Fan~\cite{ky_fan_eigenvalues} (see
also~\cite[Theorem~3.5]{trac_det_book}) to conclude that the maximum of $G$
is attained at $V := \mbox{Sp}\{v_1,\ldots ,v_d\}$, where
$v_1,\ldots ,v_d$ are the top eigenvectors of $A^*_\mu A_\mu$ and it
is unique if and only if $\sigma_d
> \sigma_{d+1}$. That is, $\xcm+\mbox{Sp}\{v_1,\ldots ,v_d\}$ is a
LS $d$-flat and unique whenever $\sigma_d
> \sigma_{d+1}$.
Furthermore,
$$
e_2^2(\mu,d) = \min_{c,V} F(c,V) =
\mbox{trace} (A^*_\mu A_\mu)
 - \max_V \mbox{trace} (P_V A^*_\mu A_\mu P^*_V) = \sum_{i>d} \sigma_i^2 \,.
$$

\end{proof}

\section{Examples of Geometric Condition Numbers on $H^{d+2}$} \label{sec:more-LS-curvatures}
%


In addition to the GCN
$\cvol$ defined in~\eqref{volume-1},
we suggest four other GCNs of $d+2$ variables.
Two of these squared GCNs are also scaled versions of this volume. The first one has the form
\begin{equation}\label{volume-2}\cvolk(X)=\frac{\MM_{d+1}(X)}{\diam^d(\mu)}.\end{equation}
The second one uses the $d$-dimensional
polar sine~\cite{LW-semimetric}. For $0\leq i\leq d+1$, the polar
sine  of   $X=(x_0,\ldots,x_{d+1})$ with respect to the
coordinate $x_i$  is
\begin{equation}\label{eq:def-psin}
\pds_{x_i}(X) = \begin{cases}\displaystyle\frac{\MM_{d+1}(X)}{\prod_{\substack{0\leq j\leq d+1\\
j\not=i}}\|x_j-x_i\|},& \textup{ if } \min(X)> 0;\\
 \qquad\qquad 0, & \textup{ otherwise.}
\end{cases}
\end{equation}
The corresponding polar GCN has the form:
\begin{equation}\cpols(X)=\diam(X)\,\sqrt{\frac{\sum_{i=0}^{d+1}\pds^2_{x_i}(X)}{d+2}}.\end{equation}

Another GCN is obtained by the $d$-dimensional LS error of the empirical measure associated with $X$
as follows:
\begin{equation}\label{eq:discrete-least-squares}
\cdlss(X)= \min_{L \in \agh}\sqrt{\frac{\sum_{i=0}^{d+2}\dist^2(x_i,L)}{d+2}}.\end{equation}
At last, we form the minimal height GCN:
\begin{equation}\label{min-height}\hmin(X)=\min_{0\leq i\leq
d+1}\dist(x_i,L[X(i)]).\end{equation}
We note that this GCN
is practically comparable to an $\ell_\infty$ version of the
$\ell_2$ GCN, $\cdlss$.
One can also form $\ell_p$ versions of such GCNs for all
$1 \leq p < \infty$, i.e., taking the $p$-th root of the average of
$p$-th powers of the distances.

The five GCNs on $H^{d+2}$ of this paper satisfy a variety of pointwise comparisons. For example,
via the product formula of~\eqref{product-formula}, as well
as~\cite[eqs.~(16), (118)]{LW-part1} for
arbitrary $X$ we have that
\begin{equation}\label{eq:control_vold_below1}\cvolk(X)\leq \cvol(X)\leq
\hmin(X)\leq\frac{(d+2)^\frac{3}{2}}{\sqrt{2}} \cdot
\cdlss(X).\end{equation}
Furthermore,  from the definitions
above we also have that
\be\cvolk(X)\leq\cvol(X)\leq\cpols(X).\label{eq:control_vold_below2}\ee

In order to control integrals of $\cpols$ by integrals of $\cdlss$,
we will use the following inequality of~\cite[Proposition $3.2$]{LW-part1}:
\begin{equation}\label{eq:opposite-control} \diam(X) \pds_{x_0}(X)
\leq \sqrt{2} \cdot (d+1) \cdot (d+2)^\frac{3}{2} \cdot
\frac{1}{\SCale_{x_0}(X)} \cdot \cdlss(X),\end{equation}
where $\SCale_{x_0}(X)$ was defined in~\eqref{eq:def_scale}.

\section{Upper Bounds on $e_2^2(\mu,d)$}
\label{sec:upper_bound}

\subsection{On $d$-Separated Measures} \label{sec:d_separated}
The $d$-separated measures form the weakest class of probability measures for which we can bound $e_2^2(\mu,d)$ by integrals of squared GCNs.
Let $\Supp$ denote the support of $\mu$ and $\diam(\mu)$ denote the diameter of this support.
We say that a $d$-simplex $X=(x_0,\ldots,x_{d}) \in
\supp(\mu)^{d+1}$  is $d$-separated (for $\omega>0$) if
\be
\MM_d(X) \geq \omega\cdot \diam(\mu)^d.
\label{eq:wds_simplex}
\ee
We say that the measure $\mu$ is $d$-separated  (with positive constants
$\omega$ and $\epsilon$) if there exist sets $ V_i \subseteq \supp(\mu)$, $0\leq i\leq d,$  that support $d$-separated $d$-simplices in the following way:
\begin{eqnarray}
 1. && \mu( V_i
) \geq \epsilon \text{ for each }
0\leq i\leq d.    \label{eq:d_sep1} \\
2. &&
\prod_{i=0}^{d}
 V_i\subseteq \left\{X\in \supp(\mu)^{d+1}: \MM_d(X)\geq\omega\cdot \diam(\mu)^d\right\}.
\label{eq:d_sep3}
\end{eqnarray}
The sets $V_i $ can be taken to be balls 
but this is not necessary and can be too
restrictive.

We also say that $\mu$ is $d$-separated with respect to the center of mass of $\mu$, $\xcm$,
or equivalently centrally $d$-separated,
if there exist sets
$V_i \subseteq \supp(\mu)$, $1\leq i\leq d$, satisfying~\eqref{eq:d_sep1} for $1\leq i\leq d$
as well as the following modification of~\eqref{eq:d_sep3}:
$$\prod_{i=1}^{d}V_i\times \{\xcm\}
 \subseteq \left\{X\in \supp(\mu)^{d+1}: \MM_d(X)\geq\omega\cdot \diam(\mu)^d\right\}.$$

The following lemma shows
that $d$-separation is a very general quantitative property in terms of
information about $e_2(\mu,d-1)$.
\begin{lemma}\label{lem:either-or} If $\mu$ is an arbitrary Borel
probability measure on $H$, then the following statements are equivalent:
\begin{enumerate}
\item $\mu$ is $d$-separated.
\item There exists a $d$-simplex $X \in \Supp^{d+1}$ such that $\MM_d(X)>0$.
\item $e_2(\mu,d-1)>0$.
\item $\mu$ is centrally $d$-separated.
\end{enumerate}
\end{lemma}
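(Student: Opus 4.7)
The plan is to prove this by establishing the implications $(1)\Rightarrow(2)\Rightarrow(3)\Rightarrow(1)$ and then $(3)\Leftrightarrow(4)$, leveraging Proposition~\ref{prop:pca} for the existence of an actual minimizer in (3).

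The implication $(1)\Rightarrow(2)$ is immediate from the definition: picking any $(x_0,\ldots,x_d)\in V_0\times\cdots\times V_d$ yields a simplex in $\Supp^{d+1}$ with $\MM_d(X)\ge \omega\cdot\diam(\mu)^d>0$. For $(2)\Rightarrow(3)$, I will argue the contrapositive. If $e_2(\mu,d-1)=0$, then by Proposition~\ref{prop:pca} there is an actual $(d-1)$-flat $L$ for which $\int \dist^2(x,L)\di\mu(x)=0$, so $\dist(\cdot,L)$ vanishes $\mu$-a.e.\ and hence, by continuity and the definition of the support, $\Supp\subseteq L$. Every $(d+1)$-tuple of points in $L$ is affinely dependent, so $\MM_d(X)=0$ for all $X\in\Supp^{d+1}$, contradicting (2).

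For $(3)\Rightarrow(1)$, the first step is to extract a single ``good'' simplex. Since $e_2(\mu,d-1)>0$, the argument in the previous paragraph shows that $\Supp$ is not contained in any $(d-1)$-flat. Hence one can pick points $x_0^\ast,\ldots,x_d^\ast\in\Supp$ that are affinely independent, so that $\MM_d(X^\ast)>0$ with $X^\ast=(x_0^\ast,\ldots,x_d^\ast)$. Set $\alpha=\MM_d(X^\ast)/(2\diam(\mu)^d)$. By joint continuity of $\MM_d$ in its $d+1$ vertices, there exists $r>0$ such that whenever $\|x_i-x_i^\ast\|<r$ for all $i$, we have $\MM_d(x_0,\ldots,x_d)\ge \alpha\cdot\diam(\mu)^d$. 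Taking $V_i=B(x_i^\ast,r)\cap\Supp$, each $V_i$ has strictly positive $\mu$-measure (because $x_i^\ast\in\Supp$), so setting $\epsilon=\min_i \mu(V_i)$ and $\omega=\alpha$ verifies the two defining conditions of $d$-separation.

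Finally, for the central versions, $(4)\Rightarrow(3)$ goes through the contrapositive exactly as $(2)\Rightarrow(3)$: if $e_2(\mu,d-1)=0$, then $\Supp\subseteq L$ for some $(d-1)$-flat $L$, and since $\xcm$ is a convex integral of points in $L$, it lies in $L$ as well, forcing every relevant simplex (including the one with $\xcm$ as a vertex) to have zero $d$-volume. For $(3)\Rightarrow(4)$, I will observe that since $\xcm$ lies in the affine hull of $\Supp$, which has dimension at least $d$ by~(3), the translated set $\Supp-\xcm$ spans a linear subspace of dimension at least $d$. Hence there exist $x_1^\ast,\ldots,x_d^\ast\in\Supp$ with $x_1^\ast-\xcm,\ldots,x_d^\ast-\xcm$ linearly independent, so $(\xcm,x_1^\ast,\ldots,x_d^\ast)$ has positive $\MM_d$; the same continuity/ball construction as in $(3)\Rightarrow(1)$ then produces $V_1,\ldots,V_d$ witnessing central $d$-separation.

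The main obstacle is the step $(2)\Rightarrow(3)$ (and its central counterpart in $(4)\Rightarrow(3)$): one must pass from an integral vanishing to a pointwise statement about the support, which is where Proposition~\ref{prop:pca} is essential — without knowing that a minimizer is actually attained, the inference from $e_2(\mu,d-1)=0$ to $\Supp$ being contained in a $(d-1)$-flat would require a separate approximation argument.
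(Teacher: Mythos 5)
Your proof is correct and follows essentially the same approach as the paper: both use the existence of a minimizing $(d-1)$-flat (Proposition~\ref{prop:pca}) for the easy direction, both extract an affinely independent $(d+1)$-tuple from $\Supp$ when $e_2(\mu,d-1)>0$, and both pass from a single positive-volume simplex to the sets $V_i$ via continuity of $\MM_d$ and the characterization of $\Supp$. The only cosmetic difference is that the paper builds the affinely independent tuple one vertex at a time by repeatedly invoking $\int\dist^2(x,L)\,\di\mu>0$, while you appeal directly to the dimension of the affine hull of $\Supp$; the content is the same.
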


\begin{proof}
The equivalence of the first two statements of the lemma immediately follows from the continuity of $\MM_d$ and the following elementary observation
(where $B(x,r)$ is the closed  ball centered at $x$ of radius $r$):
$$\Supp=\{x\in H: \mu(B(x,r))>0\textup{ for all }r>0\}.$$

To establish the equivalence of the second and third statements we first note that  one direction is trivial.  That is, $e_2(\mu,d-1)=0$ implies that $\MM_d(X)=0$ for all
$X\in\Supp^{d+1}$, since all vertices will be trapped in a $(d-1)$-dimensional minimizing space $L$.

The other direction, i.e., $e_2(\mu,d-1)>0$ implies that $\MM_d(X)>0$ for some $X\in\Supp^{d+1}$, can be established by noting that for any affine space $L$ of dimension lesser than or equal to $d-1$ we have that
$$\int\dist^2(x,L)\di\mu(x)\geq e_2^2(\mu,d-1).$$
Using this observation we can construct $d$ points, $x_0,\ldots, x_{d-1}\in\Supp$ such that  for the $(d-1)$-simplex $X(d)=(x_0,\ldots,x_{d-1})$ we have that $\MM_{d-1}(X(d))>0$.
Since
$$\int\dist^2(x,L[X(d)])\di\mu(x)>0,$$
we can select another point $x_d\in L[X(d)]^c\cap\Supp$ (where $L[X(d)]^c$ is the complement of $L[X(d)]^c$) and taking $X=X(x_d,d)\in\Supp^{d+1}$ we conclude that $\MM_d(X)>0$.

The equivalence between the third and the fourth statements is proven in exactly the same way (recalling that $\xcm$ is contained in any LS $d$-flat).
\end{proof}

\subsection{The Main Theorem for Upper Bounds on $e_2^2(\mu,d)$}

Since all GCNs with $d+2$ variables suggested here control $\cvolk$ (see~\eqref{eq:control_vold_below1}), we only need to
bound $e_2^2(\mu,d)$ by an integral of $\cvolk^2$.
\begin{theorem}\label{thm:main-1} If $\mu$ is $d$-separated for the
positive constants $\omega$ and $\epsilon$, then
\be \label{eq:thm_main-1}
e_2^2(\mu,d) \leq \frac{1}{\omega^2\cdot\epsilon^{d+1}} \, \int_{H^{d+2}}\cvolker(X)\di\mu^{d+2}(X).\ee
\end{theorem}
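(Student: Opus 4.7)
The plan is to combine the volume-height product formula~\eqref{product-formula} with Fubini's theorem and the variational definition of $e_2(\mu,d)$ as an infimum over $d$-flats. Using~\eqref{product-formula} with $n=d+1$ and $i=d+1$, for any $X=(x_0,\ldots,x_{d+1})\in H^{d+2}$ I would write
$$\MM_{d+1}^2(X) = \dist^2(x_{d+1},L[X(d+1)])\cdot\MM_d^2(X(d+1)),$$
where $X(d+1)=(x_0,\ldots,x_d)$. Since only the first factor depends on $x_{d+1}$, Fubini yields
$$\int_{H^{d+2}}\MM_{d+1}^2(X)\di\mu^{d+2}(X) = \int_{H^{d+1}}\MM_d^2(X(d+1))\left(\int_H \dist^2(y,L[X(d+1)])\di\mu(y)\right)\di\mu^{d+1}.$$

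Next, I would apply the infimum characterization of $e_2^2(\mu,d)$ to the inner integral. For any fixed $X(d+1)$, the affine span $L[X(d+1)]$ has dimension at most $d$ and is therefore contained in some $d$-flat $L'\in\agh$; since $\dist(\cdot,L')\leq\dist(\cdot,L[X(d+1)])$, the inner integral is bounded below by $\int\dist^2(y,L')\di\mu(y)\geq e_2^2(\mu,d)$. Pulling this constant through gives
$$\int_{H^{d+2}}\MM_{d+1}^2(X)\di\mu^{d+2}(X) \geq e_2^2(\mu,d)\cdot\int_{H^{d+1}}\MM_d^2(X(d+1))\di\mu^{d+1}.$$

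Finally, I would invoke the $d$-separation hypothesis to bound the remaining integral from below: restricting the domain of integration to $\prod_{i=0}^{d}V_i$, we have $\MM_d(X(d+1))\geq \omega\diam(\mu)^d$ by~\eqref{eq:d_sep3}, and the $\mu^{d+1}$-measure of this product set is at least $\epsilon^{d+1}$ by~\eqref{eq:d_sep1}, so
$$\int_{H^{d+1}}\MM_d^2(X(d+1))\di\mu^{d+1}\geq \omega^2\diam(\mu)^{2d}\epsilon^{d+1}.$$
Combining the two inequalities and dividing through by $\diam(\mu)^{2d}$, which is exactly the denominator in $\cvolker(X)=\MM_{d+1}^2(X)/\diam^{2d}(\mu)$, delivers the claim.

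The argument is essentially a bookkeeping exercise once the product formula is in hand, so I do not anticipate a serious obstacle. The only subtlety is the potentially degenerate case in which $x_0,\ldots,x_d$ are affinely dependent, but this is handled uniformly by embedding $L[X(d+1)]$ in an arbitrary $d$-flat before applying the infimum bound. The $d$-separation hypothesis has been tailored precisely so that the volume lower bound $\int\MM_d^2\di\mu^{d+1}\geq \omega^2\diam(\mu)^{2d}\epsilon^{d+1}$ is immediate, which is why it enters in such a clean way.
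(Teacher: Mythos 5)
Your proof is correct, and it takes a genuinely different route from the paper's. The paper's argument restricts attention to a fixed $(d+1)$-tuple $\widetilde{X}(d+1) \in \prod_{i=0}^d V_i$, uses the pointwise bound $\cvolker(\widetilde{X}(y,d+1)) \geq \omega^2 \dist^2(y, L[\widetilde{X}(d+1)])$ together with a Chebyshev/pigeonhole argument on the set $\prod V_i$ to produce one ``good'' tuple for which $\int_H \cvolker(\widetilde{X}(y,d+1))\di\mu(y)$ is no larger than $\rho \int_{H^{d+2}}\cvolker(X)\di\mu^{d+2}(X)$, and then takes the infimum over $\rho>\epsilon^{-(d+1)}$ to recover the stated constant. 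Your argument instead uses Fubini and the product formula~\eqref{product-formula} to factor $\int \MM_{d+1}^2\di\mu^{d+2}$ as $\int \MM_d^2(X(d+1))\left(\int\dist^2(y,L[X(d+1)])\di\mu(y)\right)\di\mu^{d+1}$, applies the variational lower bound $\int\dist^2(y,L[X(d+1)])\di\mu(y)\geq e_2^2(\mu,d)$ pointwise in $X(d+1)$ (correctly handling the degenerate case where $L[X(d+1)]$ drops dimension), and finally invokes $d$-separation once to lower bound $\int\MM_d^2\di\mu^{d+1}$. The two arguments use the same three ingredients---the product formula, the variational definition of $e_2^2$, and the $d$-separation hypothesis---but your averaging/Fubini computation avoids the existential pigeonhole step and the limiting argument in $\rho$ entirely, delivering the constant $\omega^{-2}\epsilon^{-(d+1)}$ directly. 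It is a cleaner mechanism for the same result.
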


\begin{proof}

We arbitrarily fix $\widetilde{X}(d+1)=(\widetilde{x}_0,\ldots,\widetilde{x}_d)\in\prod_{i=0}^d
\, V_i$, where $\{V_i\}_{i=0}^{d}$ are the sets of~\eqref{eq:d_sep1} and~\eqref{eq:d_sep3} defining the $d$-separated measure
$\mu$ (with constants $\omega$ and $\epsilon$).
It follows from both~\eqref{eq:d_sep3} and~\eqref{product-formula} that for $\tilde{X}(d+1)$ and $\tilde{X}(y,d+1)$ as in~\eqref{eq:def-removal1} and~\eqref{eq:def-removal2},
\begin{equation}\label{eq:matador}\cvolker(\widetilde{X}(y,d+1))=
\left(\frac{\MM_{d+1}(\widetilde{X}(y,d+1))}{\diam^d(\mu)}\right)^2\geq\\\omega^2\cdot\dist^2
(y,L[\widetilde{X}(d+1)])\,\end{equation}
and consequently
\begin{equation}\label{eq:simple_cheb1}
\frac{1}{\omega^2}\cdot\int_H\cvolker(\widetilde{X}(y,d+1))\di\mu(y)\geq\int_H \dist^2
(y,L[\widetilde{X}(d+1)])\di\mu(y)\geq e_2^2(\mu,d).
\end{equation}

For $0<\rho<\infty$ let
\begin{multline}\label{eq:simple_cheb2}\mathcal{E}_\rho=\Bigg\{\widetilde{X}(d+1)\in\prod_{i=0}^d\,
 V_i:\int_{H}\cvolker(\widetilde{X}(y,d+1))\di
\mu(y)\leq \rho \, \int_{H^{d+2}}\cvolker(X)\di\mu^{d+2}(X) \Bigg\}.\end{multline}
By Chebyshev's inequality we have $$\mu^{d+1}(\mathcal{E}_\rho)\geq \mu^{d+1}(\prod_{i=o}^d V_i)-\frac{1}{\rho}.$$
Then, taking $\rho>\frac{1}{\epsilon^{d+1}}$
forces $\mathcal{E}_\rho\not=\emptyset$.  Thus, restricting $\widetilde{X}(d+1)$ to $\mathcal{E}_\rho$ for $\rho>\frac{1}{\epsilon^{d+1}}$, and combining equations~\eqref{eq:simple_cheb1} and~\eqref{eq:simple_cheb2}, we conclude that the inequality of Theorem~\ref{thm:main-1} holds with the controlling constant ${\rho}/{\omega^2}$.
Since this holds for arbitrary such $\rho$ we obtain the constant given in Theorem~\ref{thm:main-1}.
\end{proof}

\section{Lower Bounds for $e_2^2(\mu,d)$}
\label{sec:lower_bound}

We first verify a lower bound on
$e_2^2(\mu,d)$ by an integral of $\cdlss^2$. Since the GCNs $\cvolk$, $\cvol$ and $\hmin$ are controlled by $\cdlss$
(see~\eqref{eq:control_vold_below2}), this bound also holds for all of these GCNs.
\begin{proposition}\label{prop:main-2}If $\mu$ is an arbitrary Borel probability measure on $H$, then
\be\int_{H^{d+2}}\cdls2(X)\,\di\mu^{d+2}(X)
\leq e^2_2(\mu,d).\label{eq:prop_main-2}\ee\end{proposition}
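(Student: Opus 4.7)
The plan is to exploit the variational definition of $\cdlss$ together with the existence of a least squares $d$-flat (Proposition~\ref{prop:pca}), and then to exchange the order of integration. Concretely, let $L^{*} \in \agh$ be a LS $d$-flat for $\mu$ as produced by Proposition~\ref{prop:pca}, so that
\[
e_{2}^{2}(\mu,d) = \int_{H} \dist^{2}(x,L^{*})\,\di\mu(x).
\]

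Since $\cdlss$ is defined as an infimum over all $d$-flats in $\agh$, for every $X=(x_{0},\ldots,x_{d+1})\in H^{d+2}$ the trivial upper bound
\[
\cdls2(X) \;\leq\; \frac{1}{d+2}\sum_{i=0}^{d+1}\dist^{2}(x_{i},L^{*})
\]
holds by plugging $L=L^{*}$ into the minimization defining $\cdlss$.

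Integrating this pointwise inequality against $\mu^{d+2}$ and applying Fubini--Tonelli (which is justified because all integrands are non-negative, and because each summand on the right depends only on a single coordinate of $X$) yields
\[
\int_{H^{d+2}} \cdls2(X)\,\di\mu^{d+2}(X)
\;\leq\; \frac{1}{d+2}\sum_{i=0}^{d+1}\int_{H}\dist^{2}(x_{i},L^{*})\,\di\mu(x_{i})
\;=\; \frac{d+2}{d+2}\,e_{2}^{2}(\mu,d)
\;=\; e_{2}^{2}(\mu,d),
\]
which is precisely~\eqref{eq:prop_main-2}.

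There is essentially no obstacle in this proof; the whole content lies in recognising that $\cdlss$ is defined by a minimum, so that any fixed choice of $d$-flat gives an upper bound, and that the natural choice is the LS $d$-flat guaranteed by Proposition~\ref{prop:pca}. The decoupling of the right-hand side into $d+2$ identical single-variable integrals is what makes the factor $1/(d+2)$ in the definition of $\cdlss$ exactly the correct normalization.
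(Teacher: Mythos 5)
Your proof is correct and follows essentially the same route as the paper: use the variational (minimum) definition of $\cdlss$ to bound it pointwise by the average of squared distances to a fixed $d$-flat, then integrate and decouple via Fubini. The only cosmetic difference is that the paper bounds against an \emph{arbitrary} $L\in\agh$ and takes the infimum over $L$ at the very end (so that the existence of a minimizer from Proposition~\ref{prop:pca} is not even needed), whereas you plug in the LS $d$-flat $L^{*}$ directly; both work equally well.
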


\begin{proof}
For any fixed $d$-flat $L\in \agh$, by the definition of the GCN
$\cdlss(X)$ and a subsequent application of Fubini's Theorem we obtain that
\begin{equation*}\int_{H^{d+2}}\cdls2(X)\di\mu^{d+2}(X)\leq\frac{1}{d+2}\sum_{i=0}^{d+1}
\int_{H^{d+2}}\dist^2((X)_i,L)\di\mu^{d+2}(X) = \int_{H}\dist^2(x,L)\di\mu(x).\end{equation*}
The proposition is concluded by taking the infimum over all $L\in \agh$.
\end{proof}

A lower bound on $e_2^2(\mu,d)$ in terms of an integral of the GCN
$\cpols^2$ requires the following notions of regularity of $\mu$.
For $\gamma>0$, we say that $\mu$ is $\gamma$-regular if there exists a
 $C\geq1$ such that
\begin{equation}\label{gamma-regular}
\frac{t^\gamma}{C} \leq \mu(B(x,t)) \leq C \cdot t^\gamma \ \textup{
for all }x\in\Supp,\ 0<t\leq\diam(\mu).\end{equation}
We say that $\mu$ is $\gamma$-upper-regular if the upper bound
of~\eqref{gamma-regular} holds. We
call the minimal such constant $C$ satisfying~\eqref{gamma-regular}
(or its right hand side for upper-regular measures) the {\em
regularity constant} of $\mu$.

Using these notions we formulate the following lower bound on $e_2(\mu,d)$
and verify it in the following section.
\begin{theorem}\label{thm:psin_curv_bound}
If $\mu$ satisfies either one of the following conditions: $\mu$ is $\gamma$-upper-regular for $\gamma>2$ or
$\mu$ is $\gamma$-regular for $\gamma > 1$ with $d=1$,
then
\be
\label{eq:psin_upper_bound1}\int_{H^{d+2}}\cpol2(X)\di\mu^{d+2}(X)\lessapprox
e^2_2(\mu,d),\ee
where the comparison only depends on $d$, $\diam(\mu)$ and the regularity constant of
$\mu$.
\end{theorem}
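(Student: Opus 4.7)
The plan is to reduce $\cpols$ to $\cdlss$ via the pointwise inequality \eqref{eq:opposite-control}, and then bound the resulting integral of $\cdlss^2/\SCale_{x_0}^2$ by Fubini together with a regularity-based integrability estimate on $\|y-x\|^{-2}$. First, by permutation-invariance of $\mu^{d+2}$, each of the terms $\int \diam^2(X)\pds_{x_i}^2(X)\di\mu^{d+2}(X)$ is equal, so
\begin{equation*}
\int_{H^{d+2}}\cpols^2(X)\di\mu^{d+2}(X) = \int_{H^{d+2}} \diam^2(X)\pds_{x_0}^2(X)\di\mu^{d+2}(X).
\end{equation*}
Squaring \eqref{eq:opposite-control} and using $\SCale_{x_0}^{-2}(X)\leq \diam(\mu)^2/\Min0^2(X)$ bounds the integrand pointwise by a $d$-dependent constant times $\diam(\mu)^2\cdot\cdlss^2(X)/\Min0^2(X)$, so the task reduces to controlling $\int \cdlss^2/\Min0^2\di\mu^{d+2}$ by $e_2^2(\mu,d)$.

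For the main case ($\mu$ upper $\gamma$-regular with $\gamma>2$), let $L^*$ be a LS $d$-flat from Proposition~\ref{prop:pca}. Using $L^*$ as a trial flat in the definition of $\cdlss$ gives $\cdlss^2(X)\leq (d+2)^{-1}\sum_{i=0}^{d+1}\dist^2(x_i,L^*)$, and we further estimate $1/\Min0^2(X)\leq\sum_{j=1}^{d+1}\|x_j-x_0\|^{-2}$. Expanding the product, it suffices to bound each mixed term $\int\dist^2(x_i,L^*)\,\|x_j-x_0\|^{-2}\di\mu^{d+2}(X)$. By Fubini, the factor $\|x_j-x_0\|^{-2}$ can be averaged against $\mu$ in the ``free'' variable (either $x_j$, or, when $i=j$, $x_0$), using the uniform bound
\begin{equation*}
\sup_{x\in \Supp}\int_H \|y-x\|^{-2}\di\mu(y) \lessapprox 1,
\end{equation*}
which follows from upper $\gamma$-regularity by a layer-cake computation: the substitution $s=t^{-1/2}$ turns the integral into $\int_0^\infty \mu(B(x,s))\,2s^{-3}\di s$, convergent exactly when $\int_0^{\diam(\mu)} s^{\gamma-3}\di s<\infty$, i.e., $\gamma>2$. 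The remaining integral of $\dist^2(x_i,L^*)$ against $\mu$ equals $e_2^2(\mu,d)$, while the other coordinates integrate to one, so each mixed term is $\lessapprox e_2^2(\mu,d)$, and summing the $O(d^2)$ terms gives the result.

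The main obstacle is the case $d=1$ with $\mu$ only $\gamma$-regular for $1<\gamma\le 2$, where the layer-cake bound above diverges. Here one must use both upper and lower regularity, not just the former. A natural strategy is to split $H^3$ by the value of $\SCale_{x_0}(X)$: on $\{\SCale_{x_0}(X)\geq\lambda\}$ the reduction above combined with Proposition~\ref{prop:main-2} yields a contribution $\lessapprox\lambda^{-2}e_2^2(\mu,1)$; on the degenerate complement, upper regularity bounds the measure of triples with a short edge at $x_0$, while lower regularity constrains the local geometry of $\Supp$ near $x_0$ enough to relate $\dist(x_i,L^*)$ to $\|x_i-x_0\|$ in a balanced way. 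Finding the right such balance so that the final estimate is linear rather than sublinear in $e_2^2(\mu,1)$ (naive splitting yields only the sublinear rate $e_2^{2\gamma/(\gamma+2)}$) is the delicate step, presumably exploiting the $1$-dimensional rectifiable structure of $\Supp$ forced by $\gamma$-regularity with $\gamma$ near $1$.
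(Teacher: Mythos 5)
Your argument for Case I ($\gamma$-upper-regular, $\gamma>2$) is correct and is a genuine simplification of the paper's. The paper uses a dyadic decomposition of $H^{d+2}$ into strata $S_{i,k}$ according to $\Max0(X)\approx 2^{-i}$ and $\SCale_{x_0}(X)\approx 2^{-k}$, bounds each stratum by Fubini and upper-regularity, and then sums a geometric series which converges precisely for $\gamma>2$. Your route --- expand $\cdlss^2$ against a fixed LS flat $L^*$, bound $1/\Min0^2 \leq \sum_j \|x_j-x_0\|^{-2}$, split into $O(d^2)$ mixed terms, and absorb $\|x_j-x_0\|^{-2}$ into the uniform Riesz-type bound $\sup_x\int\|y-x\|^{-2}\di\mu(y)\lessapprox 1$ --- avoids the dyadic bookkeeping entirely and identifies $\gamma>2$ as exactly the integrability threshold for that potential. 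The Fubini orderings you describe (integrate first over $x_j$, or over $x_0$ when $i=j$) are all valid once the free variable is integrated last. This is a cleaner argument for the same conclusion.

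Case II ($d=1$, $\gamma$-regular with $1<\gamma\le 2$) is where the proposal has a genuine gap, which you honestly flag. Your proposed split of $H^3$ by $\SCale_{x_0}(X)$ is actually the same starting point the paper takes (its strata $S_k$), but the missing ingredient is not ``rectifiable structure of $\Supp$'' --- it is a purely quantitative \emph{replacement lemma} (Lemma~\ref{lemma:local-inequality} in the paper). The point is that for a poorly scaled triangle $X=(x_0,x_1,x_2)$ with $x_1$ very close to $x_0$, one does not try to estimate $\sin_{x_0}(X)$ directly; instead one replaces $x_1$ by a point $y$ lying in the annulus $B(x_0,\Max0(X))\setminus B(x_0,\alpha_0\Max0(X))$ and outside a thin tube around the line $L[X(1)]$. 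For such $y$ one has $|\sin_{x_0}(X)|\le 1\le C_0|\sin_{x_0}(X(y,1))|$, and $X(y,1)$ is well-scaled so~\eqref{eq:opposite-control} applies to it. The crux is that the set of good $y$ has measure at least $\tfrac12\mu(B(x_0,\Max0(X)))$: upper regularity shows the tube intersected with the ball has small measure (cover it by $O(C_0)$ small balls), and lower regularity shows the ball itself has measure $\gtrapprox\Max0(X)^\gamma$, so removing the tube and the inner ball $B(x_0,\alpha_0\Max0(X))$ still leaves half. One then averages the pointwise replacement inequality against the normalized restriction $\tilde\mu_X$ of $\mu$ to the good set, and Fubini plus upper regularity over the suppressed variable $x_1$ (which ranges over $B(x_0,\alpha_0^k\Max0(X))$ on $S_k'$) produces the $\alpha_0^{k\gamma}$ decay needed to sum over $k$ and land on a bound linear in $e_2^2(\mu,1)$. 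So your heuristic worry about a sublinear rate never materializes once the replacement lemma is in hand; the argument is elementary covering geometry, not rectifiability.
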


\subsection{Proof of Theorem~\ref{thm:psin_curv_bound}}

The proof of this proposition is technically detailed, however it is based on few elementary ideas.
It starts by replacing the integral of
$\cpols^2(X)$ with the integral of
 $\diam^2(X) \cdot \pds^2_{x_0}(X)$ by using a change of variables. Next, in
view of~\eqref{eq:opposite-control}, $\diam(X) \cdot \pds_{x_0}(X)$ is
controlled by $\cdlss(X)$ for well-scaled simplices at $x_0$ (recalling that these are simplices for which the minimal edge length at $x_0$
is comparable to the maximal edge length at $x_0$; see Section~\ref{sec:context}).
Therefore, by applying  Proposition~\ref{prop:main-2}, the
integral of $\diam^2(X) \cdot \pds^2_{x_0}(X)$ over well-scaled
simplices is controlled by $e^2_2(\mu,d)$.

The proof thus only requires
the control of the integral of $\diam^2(X) \cdot \pds^2_{x_0}(X)$ on poorly scaled simplices in $H^{d+2}$ (i.e., simplices which are not well-scaled).
The idea  follows
the procedure of geometric
multipoles~\cite[Section~$9$]{LW-part1}, which uses a
multiscale decomposition of the integral and finds local control according to the
goodness of approximation by $d$-flats at different scales and
locations. While in~\cite{LW-part1} we sought
local control in terms of multiscale best fit $d$-flats, in the
current work we seek local control in terms of a global best fit
$d$-flat.

\subsubsection{Preliminary Notation and Conventions}
\label{subsec:notation}
For simplicity, we assume throughout the proof that $\diam(\mu)=1$ and thus suppress estimates depending on $\diam(\mu)$.

For $X \in H^{d+2}$ with $x_0 = (X)_0$, we frequently refer to $\min(X)$, $\Min0(X)$, $\Max0(X)$ and $\SCale_{x_0}(X)$ defined
in Section~\ref{sec:context}.
We decompose the set of simplices with non-zero edge lengths
according to the following sets indexed by $k,i\in\mathbb{N}_0$:
\begin{multline}\label{eq:decomp-1}S_{i,k}=\Big\{X=(x_0,\ldots,x_{d+1})\in H^{d+2}:\Max0(X)\in
(1/2^{i+1},1/2^i] \\\textup{ and
}\SCale_{x_0}(X)\in(1/2^{k+1},1/2^k]\Big\}.\end{multline}
We will mainly use their following subsets:
\begin{equation} \nonumber \label{decomp-2}S'_{i,k}=\left\{X\in
S_{i,k}:\Min0(X)=\|x_1-x_0\|\textup{ and
}\Max0(X)=\|x_2-x_0\|\right\}.\end{equation}

For $x_0\in H$ and $\ell\in\mathbb{N}_0$ we denote the annulus
centered at $x_0$ and of ``radius'' $1/2^\ell$ by
\begin{equation}A(x_0,\ell)=\{x\in H:1/2^{\ell+2}<\|x-x_0\|\leq1/2^\ell\},\end{equation}
and we note that for all $X\in S_{i,k}$ and fixed $1\leq j\leq d+1$
we have that
\begin{equation}\label{eq:inclusion?}(X)_j\in
A(x_0,\ell)\textup{ for some }i\leq\ell\leq i+k\textup{ depending on
}X\textup{ and }j.\end{equation}

\subsubsection{Case I: $\mu$ Upper-Regular for $\gamma>2$} \label{sec:case1}
We decompose the integral of $\cpols$ by using the sets of~\eqref{eq:decomp-1} and applying symmetry properties of the polar sine:
\begin{multline}\label{eq:initial-decomp}\int_{H^{d+2}}\cpol2(X)\di\mu^{d+2}(X)=\int_{H^{d+2}}\diam^2(X)\pds^2_{x_0}(X)\di\mu^{d+2}(X)=\\
\sum_{i=0}^\infty\sum_{k=0}^\infty\int_{S_{i,k}}\diam^2(X)\pds^2_{x_0}(X)\di\mu^{d+2}(X)=\\
d\cdot (d+1) \, \sum_{i=0}^\infty\sum_{k=0}^\infty \int_{S'_{i,k}}\diam^2(X)\pds_{x_0}^2(X)\di\mu^{d+2}(X).
\end{multline}
The elements of the last double sum of~\eqref{eq:initial-decomp} that correspond to
$k=0$ can be controlled by combining~\eqref{eq:opposite-control} (where
here $\SCale_{x_0}(X) \geq 1/2$) and Proposition~\ref{prop:main-2},
thus obtaining
\begin{multline}\label{corpse_whitey}\sum_{i=0}^\infty\int_{S'_{i,0}} \diam^2(X) \pds^2_{x_0}(X)\di\mu^{d+2}(X)=
\int_{\cup_{i=0}^\infty
S'_{i,0}}\diam^2(X)\pds^2_{x_0}(X)\di\mu^{d+2}(X)\\
\lessapprox e_2^2(\mu,d).\end{multline}
%

We now find sufficient bounds for the other
terms in the last double sum~\eqref{eq:initial-decomp} to obtain
convergence in $i$ and $k$.
Applying~\eqref{eq:opposite-control} to a fixed term on the last double sum
of~\eqref{eq:initial-decomp} we obtain the following bound for an
arbitrary $d$-flat $L$:
\begin{multline}\label{fixed-1-2}
\int_{S'_{i,k}}\diam^2(X)\cdot\pds^2_{x_0}(X)\di\mu^{d+2}(X)\lessapprox\\\sum_{j=0}^{d+1}\int_{S'_{i,k}}
\frac{\dist^2(x_j,L)}{\SCale^2_{x_0}(X)}\di\mu^{d+2}(X)\leq
2^{2k}\cdot\sum_{j=0}^{d+1}\int_{S'_{i,k}}
\dist^2(x_j,L)\di\mu^{d+2}(X).\end{multline}

We claim that for all $0 \leq j \leq d+1$:
\begin{equation}\label{eq:all_j}\inf_{L \in \agh}\int_{S'_{i,k}}
 \dist^2(x_j,L)\di\mu^{d+2}(X)\lessapprox(1/2^{k+i})^\gamma(1/2^i)^{\gamma\cdot
d}\cdot e^2_2(\mu,d).\end{equation}
To see this it is sufficient to integrate with respect to $x_j$ last,
and depending on the index $j$,  to vary the order of integration of the other variables slightly.
 If $j>0$ then we take the integration with respect to $x_0$ as the second to last integration.
If $j>1$, then we  integrate with respect to
$x_1$, then $x_0$ and then finally $x_j$.   Following this procedure
\eqref{eq:all_j} clearly follows
from the combination of~\eqref{eq:inclusion?} with the
upper-regularity. Indeed, the factor $(1/2^{k+i})^\gamma$ arises
from the integration over the coordinate $x_1$ if $j \neq 1$ and
$x_0$ if $j=1$, $(1/2^i)^{\gamma\cdot d}$ from the rest of
coordinates excluding $x_j$ and clearly $e^2_2(\mu,d)$ from
the coordinate $x_j$.

Applying~\eqref{eq:all_j} to the RHS of~\eqref{fixed-1-2}, we obtain that
\begin{equation}\label{good-boy}
\int_{S'_{i,k}}\diam^2(X)\cdot\pds^2_{x_0}(X)\di\mu^{d+2}(X)
\lessapprox
1/2^{(\gamma-2)\cdot k}
\cdot1/2^{\gamma\cdot (d+1) \cdot i}\cdot
e^2_2(\mu,d).\end{equation}

Finally, combining~\eqref{eq:initial-decomp}, \eqref{corpse_whitey}
and~\eqref{good-boy} we conclude that
\begin{multline*}\int_{H^{d+2}}\diam^2(X)\pds^2_{x_0}(X)\di\mu^{d+2}(X)=
\sum_{i=0}^\infty\sum_{k=0}^\infty\int_{S_{i,k}}\diam^2(X)\pds^2_{x_0}(X)\di\mu^{d+2}(X)
\lessapprox\\
\left(
1 +
\sum_{i=0}^\infty\sum_{k=1}^\infty1/2^{(\gamma-2)\cdot k}
\cdot1/2^{\gamma\cdot (d+1)\cdot i}\right)\cdot
e^2_2(\mu,d).\end{multline*}
Since the coefficient on the RHS above
is clearly finite for $\gamma>2$, the proposition is thus proved for the current case.

\subsubsection{Case II: $\mu$ is $\gamma$-Regular for $\gamma>1$ and $d=1$}%
\label{sec:case2}
Since $d=1$ we work with triangles $X=(x_0,x_1,x_2)\in H^3$ and we need to prove that
\be\int_{H^3}\diam^2(X) \sin^2_{x_0}(X) \di\mu^3(X)\lessapprox
e^2_2(\mu,1).\label{eq:psin_upper_bound2}\ee
The procedure here is similar to that
of  Section~\ref{sec:case1}, however we must use  an inequality for
the sine function that holds  with high  probability for a $\gamma$-regular $\mu$ with  $\gamma>1$.
We clarify this  as follows.

For fixed $X=(x_0,x_1,x_2)\in H^3$, $X(u,1)$ as in~\eqref{eq:def-removal2}, and $C\geq1$,  let
\begin{equation}\label{one-term-set}U(X,C):=\left\{u\in H: |\sin_{x_0}(X)|\leq C\cdot |\sin_{x_0}(X(u,1))|\right\},\end{equation}
and  for $\alpha>0$ let $A_{\alpha}(X,C)$ denote the
restriction of $U(X,C)$ to an annulus:
\be
A_{\alpha}(X,C):=U(X,C)\cap
B(x_0,\Max0(X))\setminus B(x_0,\alpha\cdot\Max0(X)).
\label{eq:def_A}\ee
The following lemma shows that the defining inequality of~\eqref{one-term-set} occurs with high probability (we delay its proof to Section~\ref{app:one-term}).
\begin{lemma}\label{lemma:local-inequality}If
$\mu$ is $\gamma$-regular for $\gamma>1$ with regularity constant $C_\mu$, and the constants $C_0$ and $\alpha_0$ are such that
\be \label{eq:alpha_bound}
C_0 \geq
\half \cdot \left(4\cdot 5^{\gamma/2} \cdot C_\mu^2\right)^{\frac{1}{\gamma-1}}
\ \textup{ and } \
0<\alpha_0\leq\left({4\cdot
C^2_{\mu}}\right)^{-1/\gamma},\ee
then the following inequality holds
uniformly for all $X\in\supp(\mu)^3$:
$$\mu\left(A_{\alpha_0}(X,C_0)\right)\geq\frac{1}{2}
\cdot\mu\left(B(x_0,\Max0(X)\right).$$\end{lemma}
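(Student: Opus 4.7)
The plan is to bound the $\mu$-measure of the bad set $B(x_0,R)\setminus A_{\alpha_0}(X,C_0)$, with $R:=\Max0(X)$, and subtract it from a lower bound on $\mu(B(x_0,R))$. The key geometric observation is that if $L$ denotes the line through $x_0$ in direction $(x_2-x_0)/\|x_2-x_0\|$, then for any $u\in H\setminus\{x_0\}$,
\[
|\sin_{x_0}(X(u,1))| \;=\; \frac{\dist(u,L)}{\|u-x_0\|},
\]
so, setting $s:=|\sin_{x_0}(X)|$, the failure $u\notin U(X,C_0)$ translates into $\dist(u,L)<(s/C_0)\|u-x_0\|$. Restricting to $u\in B(x_0,R)$ confines the bad set to the tube
\[
T_r\;:=\;\{u\in H:\dist(u,L)\leq r\}, \qquad r:=(s/C_0)R.
\]

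Next I would estimate $\mu(T_r\cap B(x_0,R))$ by an efficient covering. Let $v$ denote the unit direction of $L$ and place centers $p_k:=x_0+krv$ for integers $k$ with $|k|r\leq R$, giving about $2R/r$ points. Decomposing the displacement of any $u\in T_r\cap B(x_0,R)$ from its nearest $p_k$ into parallel ($\leq r/2$) and perpendicular ($\leq r$) components yields $\|u-p_k\|\leq\sqrt{(r/2)^2+r^2}=r\sqrt{5}/2$, so these balls cover the tube. Applying $\gamma$-upper-regularity to each (re-centering at a nearby point of $\supp(\mu)$ when needed, at most a harmless doubling of the radius) produces
\[
\mu(T_r\cap B(x_0,R))\;\lessapprox\;5^{\gamma/2}\cdot C_\mu\cdot R\cdot r^{\gamma-1},
\]
where the implicit constant depends only on $\gamma$; this is the origin of the $5^{\gamma/2}$ factor in the hypothesis on $C_0$.

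Finally, $\gamma$-regularity gives $\mu(B(x_0,R))\geq R^\gamma/C_\mu$ and $\mu(B(x_0,\alpha_0 R))\leq C_\mu\alpha_0^\gamma R^\gamma$, whence
\[
\mu(A_{\alpha_0})\;\geq\;\mu(B(x_0,R))-\mu(B(x_0,\alpha_0 R))-\mu(T_r\cap B(x_0,R)).
\]
Requiring each subtracted term to be at most $R^\gamma/(4C_\mu)$ yields $\mu(A_{\alpha_0})\geq\tfrac12\mu(B(x_0,R))$. The first requirement reduces to $\alpha_0\leq(4C_\mu^2)^{-1/\gamma}$; the second, after substituting $r=(s/C_0)R$ and using $s\leq 1$, reduces to a lower bound on $C_0$ of the form $\tfrac12(4\cdot 5^{\gamma/2}C_\mu^2)^{1/(\gamma-1)}$, matching the hypothesis. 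I expect the main obstacle to be pinning down the constant in the covering step precisely: achieving the exact $5^{\gamma/2}$ factor forces the grid spacing to equal $r$ along $L$, and one must handle centers that may lie outside $\supp(\mu)$ carefully (by re-centering at a nearby support point), though no essential difficulty arises beyond this bookkeeping.
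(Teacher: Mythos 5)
Your argument follows the same route as the paper: you show the bad set inside $B(x_0,R)$ (with $R=\Max0(X)$) is confined to a thin tube around the line $L[X(1)]$, you cover the tube by $O(C_0)$ balls of radius $\sqrt{5}/2$ times the tube width, you invoke upper-regularity per ball and lower-regularity for the big ball, and you remove the inner ball of radius $\alpha_0 R$; the only cosmetic difference is that you track the tube radius as $(s/C_0)R$ with $s=|\sin_{x_0}(X)|$ and only use $s\le 1$ at the end, whereas the paper bounds $s\le 1$ up front and works with the tube of radius $R/C_0$ (equivalently, it proves the two statements \eqref{eq:oneterm1}--\eqref{eq:oneterm2} each at level $3/4$ and intersects, which is the same arithmetic as your three-way subtraction). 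The constants come out identically.
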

For the rest of this section
we use the optimal values of the constants $C_0$ and $\alpha_0$ in~\eqref{eq:alpha_bound} (i.e., the lower bound
for $C_0$ and upper bound for $\alpha_0$).
We decompose all triangles with non-zero edge lengths into
the sets
$$S_k=\{X\in H^3:\SCale_{x_0}(X)\in(\alpha_0^{k+1},\alpha_0^k]\},$$
for  $ k \geq 1,$ and we denote
$$S_k' =\{X\in S_k:\Max0(X)=\|x_2-x_0\|\}\subset S_k.$$

By the symmetry of $|\sin_{x_0}(X)|$ with respect to $x_1$ and $x_2$, we note that
\begin{multline}\label{d=1-decompose}\int_{H^3}\diam^2(X)\cdot\sin^2_{x_0}(X)
\di\mu^3(X)
=\sum_{k=0}^{\infty}\int_{S_k}\diam^2(X)\cdot\sin^2_{x_0}(X)
\di\mu^3(X)
\leq\\
2 \, \sum_{k=0}^{\infty}\int_{S'_k}\diam^2(X)\cdot\sin^2_{x_0}(X)
\di\mu^3(X).
\end{multline}
We note that $X\in S'_0$ is well-scaled (for $\alpha_0$), and by combining~\eqref{eq:opposite-control}  and Proposition~\ref{prop:main-2} we see that
$$\int_{S'_0}\diam^2(X)\cdot\sin^2_{x_0}(X)
\di\mu^3(X)\lessapprox  e_2^2(\mu,1). $$
We now use Lemma~\ref{lemma:local-inequality} to control the individual terms for $k\geq 1$ on the RHS of~\eqref{d=1-decompose}.
We arbitrarily fix $X\in S_k'$ and define the probability measure
$$\tilde{\mu}_X:=\frac{\mu|_{A_{\alpha_0}(X,C_0)}}{\mu\left(A_{\alpha_0}(X,C_0)\right)}.$$
We note that for any $y \in
A_{\alpha_0}(X,C_0)$:
\begin{equation}\nonumber\diam^2(X)\cdot\sin^2_{x_0}(X)\leq C_0^2 \cdot \diam^2(X(y,1))\cdot\sin^2_{x_0}(X(y,1)).
\end{equation}
and consequently for $X(y,1)$ as in~\eqref{eq:def-removal2},
\begin{equation}\label{prob-ineq}\diam^2(X)\cdot\sin^2_{x_0}(X)\leq C_0^2\int_{A_{\alpha_0}(X,C_0)}\diam^2(X(y,1))\cdot\sin^2_{x_0}(X(y,1))\di\tilde{\mu}_X(y).
\end{equation}
Since the triangle $X(y,1)$ is well-scaled for each $y\in A_{\alpha_0}(X,C_0)$, we can apply the inequality of~\eqref{eq:opposite-control} to the integrand on the
 RHS of~\eqref{prob-ineq} to obtain
 \begin{equation}\label{prill}\diam^2(X)\cdot\sin^2_{x_0}(X)\lessapprox  \dist^2(x_0,L)+ \int_{A_{\alpha_0}(X,C_0)}\dist^2(y,L)\di\tilde{\mu}_X(y) +\dist^2(x_2,L)
 \end{equation}
for any $L \in \mathrm{AG}_{1}(H)$, where the constant of the inequality is independent of $k$.

Fixing the line $L$,   the middle term on the RHS of~\eqref{prill} trivially has the bound
\begin{equation}\label{needed-again}\int_{A_{\alpha_0}(X,C_0)}\dist^2(y,L)\di\tilde{\mu}_X(y)\leq \frac{1}{\mu(A_{\alpha_0}(X,C_0))}\int_{H}\dist^2(x,L)\di\mu(x).\end{equation}
Thus, applying~\eqref{prill} to~\eqref{d=1-decompose}, and then~\eqref{needed-again} to~\eqref{prill},   for an arbitrary line $L$  we have
\begin{multline}\label{well-control}\int_{S_k'}\diam^2(X)\cdot\sin^2_{x_0}(X)
\di\mu^3(X)\lessapprox\\
\int_{S_k'}\dist^2(x_0,L)
\di\mu^3(X)+  \int_{H}\dist^2(x,L)\di\mu(x)\cdot \int_{S_k'} \frac{\di\mu^3(X)}{\mu(A_{\alpha_0}(X,C_0))}  +\int_{S_k'}\dist^2(x_2,L)
\di\mu^3(X).\end{multline}

We  bound the  terms  of~\eqref{well-control}
separately. The first term satisfies
\begin{multline}\label{eq:just_ineq1}\int_{ S_k' }\dist^2(x_0,L)\di\mu^3(X)
 \leq\\\int_{H^2}\dist^2(x_0,L)
\left(\int_{B(x_0,\alpha_0^k\cdot\|x_2-x_0\|)}\di\mu(x_1)\right)
\di\mu(x_2)\di\mu(x_0)\lessapprox\\ \alpha_0^{k\cdot\gamma}\cdot\int_H\dist^2(x,L)\di\mu(x).\end{multline}
A similar  computation gives the same bound for the third term.

Then, by Lemma~\ref{lemma:local-inequality} and the regularity of $\mu$  we have that $\mu\left(A_{\alpha_0}(X,C_0)\right)\gtrapprox\Max0(X)^\gamma$, and thus the second term of~\eqref{well-control} satisfies
\begin{multline}\label{eq:just_ineq23}\int_H\dist^2(x,L)\di\mu(x)\cdot\int_{S_k'} \frac{\di\mu^3(X)}{\mu(A_{\alpha_0}(X,C_0))}
 \lessapprox\\
\int_H\dist^2(x,L)\di\mu(x)\cdot
 \left(
\int_{S_k'}\frac{\di\mu^3(X)}{\|x_2-x_0\|^\gamma}\right)\lessapprox\\\alpha_0^{k\cdot\gamma}\cdot
\int_H\dist^2(x,L)\di\mu(x).\end{multline}
Applying~\eqref{eq:just_ineq1} and~\eqref{eq:just_ineq23}
 to the terms of~\eqref{well-control} we
have the bound
\begin{equation}\label{frantic-loop}\int_{S_k'}\diam^2(X)\cdot\sin^2_{x_0}(X)
\di\mu^3(X)\lessapprox\alpha_0^{k\cdot\gamma}\cdot
\int_H\dist^2(x,L)\di\mu(x).\end{equation}
Taking an infimum over $L\in \mathrm{AG}_{1}(H)$ on the RHS of~\eqref{frantic-loop}, and then
summing this inequality over $k\geq1$
we see that the proposition holds.

\subsubsection{Proof of Lemma~\ref{lemma:local-inequality}}\label{app:one-term}
Equation~\eqref{eq:alpha_bound} is a direct consequence of the following two equations:
\be\label{eq:oneterm1} \mu(B(x_0,\Max0(X)) \setminus
B(x_0,\alpha_0\cdot\Max0(X)))
\geq\frac{3}{4}\cdot\mu(B(x_0,\Max0(X))\ee
and
\be
\label{eq:oneterm2}
\mu(U(X,C_0))\geq\frac{3}{4}\cdot\mu(B(x_0,\Max0(X)).
\ee

The inequality of~\eqref{eq:oneterm1} follows from the $\gamma$-regularity of $\mu$ and the  constant $\alpha_0$.
Indeed,
\begin{multline*}\label{eq:oneterm1} \mu(B(x_0,\Max0(X)) \setminus
B(x_0,\alpha_0\cdot\Max0(X)))
\geq \\\mu(B(x_0,\Max0(X))) - C_{\mu} \cdot
\alpha_0^\gamma \cdot \Max0(X)^\gamma
\geq\frac{3}{4} \cdot\mu(B(x_0,\Max0(X)).\end{multline*}

We conclude by proving~\eqref{eq:oneterm2}.
We form the tube of radius $\Max0(X)/C_0$ on the line $L[X(1)]$,
$$\Tbe\left(L[X(1)],\Max0(X)/C_0\right)
=\left\{y:\dist(y,L[X(1)])\leq \Max0(X) / C_0 \right\},$$
and note that
\be\label{eq:inclusion_tube}(\Tbe\left(L[X(1)],\Max0(X)/C_0\right))^c\cap
B(x_0,\Max0(X))\subseteq U(X,C_0).\ee
Indeed, since $|\sin_{x_0}(X(u,1))| \cdot \|u-x_0\|=\dist(u,L[X(1)])$ we have the following lower bound for any
$u\in B(x_0,\Max0(X))$:
\begin{equation}\label{eq:sine_ineq_simple}|\sin_{x_0}(X(u,1))|\cdot \Max0(X) \geq\dist(u,L[X(1)]).\end{equation}
Applying~\eqref{eq:sine_ineq_simple} to $u\in(\Tbe\left(L[X(1)],\Max0(X)/C_0\right))^c\cap
B(x_0,\Max0(X))$, we obtain that
$$C_0 \cdot |\sin_{x_0}(X(u,1))| \geq 1\geq|\sin_{x_0}(X)|,$$
i.e., $u \in U(X,C_0)$ and~\eqref{eq:inclusion_tube} is concluded.

At last, we show that
\be \label{eq:lemma_last_measure}\mu\left(\Tbe\left(L[X(1)],\Max0(X)/C_0\right)\cap
B(x_0,\Max0(X))\right) \leq  \frac{1}{4} \cdot\mu(B(x_0,\Max0(X))\ee
and combining it with~\eqref{eq:inclusion_tube} we establish~\eqref{eq:oneterm2}.
We first note that the intersection of the tube $\Tbe (L[X(1)],\Max0(X)/C_0)$ with
$B(x_0,\Max0(X))$ can be covered by at most $2 \cdot C_0$ balls
of radius $\sqrt{5} \cdot\Max0(X)/ (2 \cdot C_0)$ and thus
\begin{multline} \label{eq:lemma_real_last}\mu\left(\Tbe\left(L[X(1)],\Max0(X)/C_0\right)\cap
B(x_0,\Max0(X))\right) \leq \\  C_\mu^2\cdot
{5}^{\gamma/2}\cdot (2 \cdot C_0)^{1-\gamma}\cdot\mu(B(x_0,\Max0(X)).\end{multline}
Equation~\eqref{eq:lemma_last_measure} and consequently~\eqref{eq:oneterm2} follows by combining~\eqref{eq:alpha_bound}
with~\eqref{eq:lemma_real_last}.

\section{GCNs on $H^{d+1}$ and $H^{d}$ and their Corresponding Comparisons}
\label{sec:gcn_reduce}
\subsection{$d$-Dimensional GCNs of Only $d+1$ Variables}
\label{subsec:cm}

If one knows a point that lies on a LS $d$-flat, then  any of the above GCNs can be reduced to a function of
only $d+1$ variables by arbitrarily fixing one of the original variables at that point.
We exemplify this idea with the center of mass, $\xcm$, which lies on the LS $d$-flat (see Proposition~\ref{prop:pca}) and later explain how to extend it to
other points.

We consider the set of $(d+1)$-simplices with a fixed vertex at $\xcm$, that is, we define the set  $H^{d+1}_\xcm=\{\xcm\}\times H^{d+1}$, and we restrict our attention to the elements
\begin{equation}\label{simple-note}X=(\xcm,x_1,\ldots,x_{d+1})\in  H^{d+1}_\xcm.\end{equation}
As such, we can replace any GCN, $c: H^{d+2} \rightarrow \reals,$ by $c(X): H^{d+1}_\xcm \rightarrow \reals$ and establish the relevant comparisons
as follows, where $\mu^{d+1}$ on $H^{d+1}_\xcm$ is clearly  on the set $H^{d+1}$.

\begin{proposition} \label{prop:bound-center-of-mass}
If $\mu$ is centrally $d$-separated (for $\omega$ and $\eps$)
then
\be \label{eq:lower-bound-center-of-mass} e_2^2(\mu,d)\leq
\frac{1}{\omega^2\cdot\epsilon^{d}}
\int_{H^{d+1}_\xcm}\cvolker(X)\di\mu^{d+1}(X).\ee

If on the other hand $\mu$ satisfies either one of the following conditions: $\mu$ is $\gamma$-upper-regular for $\gamma>2$ or
$\mu$ is $\gamma$-regular for $\gamma > 1$ with $d=1$,
then
\be
\label{eq:psin_upper_bound1_2}\int_{H^{d+1}_\xcm}\cpol2(X)\di\mu^{d+1}(X)\lessapprox
e^2_2(\mu,d),\ee
where the comparison only depends on $d$, $\diam(\mu)$ and the regularity constant of
$\mu$.

Moreover, if $\mu$ is an arbitrary Borel probability measure on $H$, then
\be \label{eq:upper-bound-center-of-mass}
\int_{H^{d+1}_\xcm}\cdls2(X)\di\mu^{d+1}(X)\leq e^2_2(\mu,d). \ee
\end{proposition}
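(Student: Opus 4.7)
The plan is to treat the three inequalities separately, each as a ``fixed-vertex'' version of an earlier result, all three arguments exploiting that $\xcm\in L$ for every LS $d$-flat $L$ (Proposition~\ref{prop:pca}). For the $\cdlss$ bound~\eqref{eq:upper-bound-center-of-mass} I would simply imitate the Fubini argument of Proposition~\ref{prop:main-2}. Fix any LS $d$-flat $L$ and write
$$\int_{H^{d+1}_\xcm}\cdls2(X)\di\mu^{d+1}(X)\leq\frac{1}{d+2}\sum_{i=0}^{d+1}\int_{H^{d+1}_\xcm}\dist^2((X)_i,L)\di\mu^{d+1}(X).$$
The $i=0$ summand vanishes because $(X)_0=\xcm\in L$, and each of the remaining $d+1$ summands equals $\int_H\dist^2(x,L)\di\mu(x)=e_2^2(\mu,d)$, yielding the factor $(d+1)/(d+2)\leq1$.

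For~\eqref{eq:lower-bound-center-of-mass} I would replay the proof of Theorem~\ref{thm:main-1} with $\widetilde{X}(d+1)\in\{\xcm\}\times\prod_{i=1}^d V_i$, where $V_1,\ldots,V_d$ are the sets provided by the central $d$-separation assumption. The central $d$-separation condition gives $\MM_d(\widetilde{X}(d+1))\geq\omega\cdot\diam(\mu)^d$, so~\eqref{eq:matador} and~\eqref{eq:simple_cheb1} go through unchanged. The Chebyshev step of~\eqref{eq:simple_cheb2} now requires $\mu^d(\prod_{i=1}^dV_i)\geq\eps^d$ rather than $\eps^{d+1}$, so the exceptional-set bound needs $\rho>1/\eps^d$ and the resulting constant drops to $1/(\omega^2\eps^d)$.

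The harder piece is~\eqref{eq:psin_upper_bound1_2}, where Theorem~\ref{thm:psin_curv_bound} must be adapted with $x_0=\xcm$ held fixed. I would exploit the symmetry of $\cpols^2$ in its $d+2$ vertices together with the symmetry of $\mu^{d+1}$ in $x_1,\ldots,x_{d+1}$ to split the integral into two pieces: (A) the single term with polar sine at the fixed vertex $\xcm$, and (B) the $d+1$ equal terms with polar sine at one of the integrated vertices (say $x_1$). For piece (A), I would reproduce the multiscale decomposition of Section~5.2 using annuli centered at $\xcm$ and applying~\eqref{eq:opposite-control} together with Proposition~\ref{prop:main-2}; the bookkeeping is actually easier than in Theorem~\ref{thm:psin_curv_bound} because one integration variable is already eliminated, and the $j=0$ contribution to the right-hand side of~\eqref{eq:all_j} vanishes since $\xcm\in L$. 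For piece (B), I would relabel so that $x_1\in\supp(\mu)$ plays the role of the base vertex, treat $\xcm$ as one of the remaining (non-integrated) vertices, and sum over the finitely many ways $\xcm$ can contribute to the min/max edge lengths at $x_1$; the $\gamma>2$ (or $\gamma>1$ with $d=1$) hypothesis again provides the necessary geometric decay.

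The main obstacle I anticipate is in piece (A) under the $\gamma$-regular, $d=1$ sub-case: Lemma~\ref{lemma:local-inequality} is stated uniformly for $X\in\supp(\mu)^3$, yet $\xcm$ need not lie in $\supp(\mu)$. To handle this I would either verify that the proof of Lemma~\ref{lemma:local-inequality} only uses upper-regularity bounds at $\xcm$ (valid at \emph{any} point thanks to boundedness of $\supp(\mu)$) combined with a direct lower bound on $\mu(B(\xcm,\Max0(X)))$ obtained by intersecting with the ball around a nearby point of $\supp(\mu)$, or else carry out the entire decomposition with a substitute base point in $\supp(\mu)$ close to $\xcm$ and transfer the bounds back to $\xcm$ at the cost of additional constants depending on the regularity of $\mu$.
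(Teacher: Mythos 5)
Your treatments of~\eqref{eq:lower-bound-center-of-mass} and~\eqref{eq:upper-bound-center-of-mass} are correct. For the latter the paper actually argues a little differently: it works with an \emph{arbitrary} $d$-flat $L$, bounds the extra summand $\dist^2(\xcm,L)$ by Jensen's inequality applied to the convex function $\dist^2(\cdot,L)$ and $\xcm=\int x\di\mu(x)$, and only then takes the infimum over $L$; you instead fix $L$ to be a LS $d$-flat so that the $\xcm$-term vanishes outright. Both routes work, and yours is a hair sharper (factor $(d+1)/(d+2)$).

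The gap is in~\eqref{eq:psin_upper_bound1_2}. Your decomposition of $\cpol2$ into the single term $\diam^2(X)\pds^2_{\xcm}(X)$ (your piece~(A)) and the $d+1$ symmetric terms $\diam^2(X)\pds^2_{x_i}(X)$, $i\geq1$ (piece~(B)), is the right first step, and your concern about Lemma~\ref{lemma:local-inequality} requiring $\xcm\in\supp(\mu)$ in the $\gamma$-regular, $d=1$ case is legitimate, with a plausible fix. But the claim that the Case~I bookkeeping for piece~(A) becomes ``actually easier because one integration variable is already eliminated'' is backwards, and it hides the crux. In the paper's Case~I proof, the decay factor $(1/2^{k+i})^\gamma$ in~\eqref{eq:all_j} for the $j=1$ (minimal-edge) term is obtained \emph{precisely} by integrating $x_0$ over an annulus of radius $\approx2^{-(i+k)}$ around $x_1$ as the second-to-last integration. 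Once $x_0=\xcm$ is frozen, that degree of freedom is gone: integrating $x_2,\ldots,x_{d+1}$ first yields only $\lessapprox 2^{-i\gamma d}$, and integrating $x_1$ last over $A(\xcm,i+k)$ produces no decay in $k$, so the factor $2^{2k}$ coming from $\SCale_{\xcm}^{-2}(X)$ in~\eqref{fixed-1-2} is unsuppressed and the $k$-sum diverges. You can instead invoke the pointwise estimate $\dist(x_1,L)\leq\|x_1-\xcm\|\lesssim 2^{-(i+k)}$ (valid because $\xcm\in L$), which does restore convergence, but then the $j=1$ contribution is bounded by a constant depending on the regularity constant rather than by $e_2^2(\mu,d)$, and converting that into $\lessapprox e_2^2(\mu,d)$ would require an additional lower bound on $e_2^2(\mu,d)$ in terms of the $\gamma$-upper-regularity constant that neither your sketch nor the paper's one-line proof supplies. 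Piece~(B) is also more delicate than you indicate: after relabeling, $\xcm$ is a \emph{fixed} non-base vertex sitting inside the $S'_{i,k}$ constraints centered at the integrated base point $x_1$ (for instance $\xcm$ may realize $\Min_{x_1}(X)$, which then pins $x_1$ to an annulus around a fixed point rather than around an integrated one), and merely announcing that you will ``sum over the finitely many ways $\xcm$ can contribute to the min/max'' does not yet establish that the resulting multiscale sums converge.
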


\begin{proof}
The proofs of~\eqref{eq:lower-bound-center-of-mass} and~\eqref{eq:psin_upper_bound1_2} are identical to those of~\eqref{eq:thm_main-1} and~\eqref{eq:psin_upper_bound1} respectively, while they also use the fact
that $\xcm$ lies in the LS $d$-flat.

In order to prove~\eqref{eq:upper-bound-center-of-mass}, we apply~\eqref{eq:control_vold_below1} and obtain
the following for
any fixed  $L\in\agh$:
\begin{multline*}
\int_{H^{d+1}_\xcm}\cdls2(X)\di\mu^{d+1}(X )\leq\\
\frac{1}{d+2}\left(\sum_{i=1}^{d+1}\int_{H^{d+1}_\xcm}\dist^2(x_i,L)\di\mu^{d+1}(X)
+\int_{H^{d+1}_\xcm}\dist^2(\xcm,L)\di\mu^{d+1}(X)\right).\end{multline*}

Since the function $\dist^2(\cdot,L)$ is convex for fixed
$L$, we apply Jensen's Inequality to the last term on the RHS and
then Fubini's Theorem to all terms and thus conclude~\eqref{eq:upper-bound-center-of-mass}.
\end{proof}

We note that when using a fixed point $y$ on the LS $d$-flat instead of $\xcm$, then
few modifications are needed. First of all, the minimizations defining both $e_2(\mu,d)$ and $\cdlss$  need to be restricted to subspaces in $\agh$ containing the point $y$.
Also, $d$-separation needs to be defined w.r.t.~$y$ (instead of $\xcm$).
When clustering $d$-dimensional linear subspaces, the LSCC algorithm (linear SCC)~\cite{spectral_applied,
spectral_theory} applies such a strategy with $y=0$, which obviously lies on all linear subspaces.

\subsection{A $d$-Dimensional GCN of Only $d$ Variables}
\label{subsec:deshpande} The work of Deshpande et al.~\cite{DRVW06,
DV06} suggests a GCN on $H^d$, which we denote by $\cdes$.  The idea is to look at the geometry of $d$-simplices having the center of mass,  $\xcm$, as a fixed vertex.  As such, we make the definition $H^d_{\xcm}=\{\xcm\}\times H^d$, and we consider $d$-simplices
$$\tilde{X}=(\xcm,x_1,\ldots,x_d)\in H^d_\xcm.$$
We note that  $\tilde{X}\in  H^d_\xcm$ is simply the projection of some $X\in  H^{d+1}_\xcm$, i.e.~$\tilde{X}= X(d+1)$.

We define the square of
$\cdes(\tilde{X})$ in the following way:
$$
\cdesh2(\tilde{X})= \frac{\displaystyle\MM^2_{d}(\tilde{X}) \, \int_H
\dist^2(y,L[\tilde{X}])\di
\mu(y)}{\displaystyle\int_{ H^d_\xcm}\MM^2_{d}(\tilde{Y})\di \mu^{d} (\tilde{Y})},
$$
where $\mu^d$ on $H^d_\xcm$ is clearly taken on the set $H^d$.
For a fixed $\tilde{X}\in H^d_\xcm$, the  GCN $\cdesh2(\tilde{X})$ is simply the average squared volume of $(d+1)$-simplices having  $\tilde{X}$ as a $d$-dimensional face, divided by the average squared volume of $d$-simplices with the center of mass as a vertex.  This follows directly from~\eqref{product-formula}.

The comparison of $e_2^2(\mu,d)$ and the integral of $\cdesh2$ is established as follows.
\begin{theorem}
If $\mu$ is centrally $d$-separated with compact support, then
\be \label{eq:deshpande_more} e_2^2(\mu,d) \leq \frac{1}{\omega^2\cdot\epsilon^{d}}
 \int_{H^d_{\xcm}}\cdesh2(\tilde{X}) \di \mu^{d} (\tilde{X}).\ee
If on the other hand $\mu$ is an arbitrary Borel probability measure on $H$, then
\be \label{eq:deshpande} \int_{H^d_{\xcm}}\cdesh2(\tilde{X}) \di \mu^{d} (\tilde{X}) \leq e_2^2(\mu,d). \ee
\label{thm:deshpande}\end{theorem}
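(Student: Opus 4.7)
The unifying starting point is the product formula~\eqref{product-formula}: $\MM_{d+1}^2(X(y,d{+}1))=\MM_d^2(\tilde X)\cdot\dist^2(y,L[\tilde X])$. Substituting this into the numerator of $\cdesh2$ and applying Fubini yields
\[
\int_{H^d_\xcm}\cdesh2(\tilde X)\di\mu^d(\tilde X)
=\frac{\int_{H^{d+1}_\xcm}\MM_{d+1}^2(X)\di\mu^{d+1}(X)}{\int_{H^d_\xcm}\MM_d^2(\tilde Y)\di\mu^d(\tilde Y)},
\]
so the integrated GCN is an average squared $(d{+}1)$-volume of simplices anchored at $\xcm$, normalized by the analogous average $d$-volume. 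Both inequalities will be read off from this identity.

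To establish~\eqref{eq:deshpande_more}, I will mirror the proof of Theorem~\ref{thm:main-1}. Since every $L[\tilde X]$ contains $\xcm$ and by Proposition~\ref{prop:pca} the LS $d$-flat also contains $\xcm$, the quantity $F(L[\tilde X]):=\int\dist^2(y,L[\tilde X])\di\mu(y)$ is $\geq e_2^2(\mu,d)$ for every $\tilde X$. Restricting the numerator integral to $\prod_{i=1}^d V_i$, where the $V_i$ realize central $d$-separation, gives $\MM_d^2(\tilde X)\geq\omega^2\diam(\mu)^{2d}$ pointwise and $\mu^d\bigl(\prod V_i\bigr)\geq\eps^d$, producing a numerator lower bound of $\omega^2\eps^d\diam(\mu)^{2d}\,e_2^2(\mu,d)$. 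For the denominator, the inequality $\|x-\xcm\|\leq\diam(\mu)$ on $\supp(\mu)$ (obtained by writing $\xcm$ as a convex combination of support points) gives $\int\MM_d^2\di\mu^d\leq\diam(\mu)^{2d}$. Dividing yields the constant $1/(\omega^2\eps^d)$.

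To establish~\eqref{eq:deshpande}, I will pass to the orthonormal eigenbasis of $A_\mu^*A_\mu$ introduced in Section~\ref{subsec:data_to_feature}, in which the covariance operator $C=\int(y-\xcm)(y-\xcm)^*\di\mu(y)$ is diagonal with entries $\sigma_i^2$. Expanding each $\MM_k^2=\det G$ with $G_{ij}=\langle x_i-\xcm,x_j-\xcm\rangle$ via the Leibniz formula and using cycle-by-cycle independence of the $x_i$'s, each cycle of length $\ell$ integrates to $\trace(C^\ell)=\sum_i\sigma_i^{2\ell}$; collecting contributions by cycle type yields the closed form $\int\MM_k^2\di\mu^k=k!\,e_k(\sigma_1^2,\sigma_2^2,\ldots)$, where $e_k$ denotes the $k$th elementary symmetric polynomial. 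The claim then reduces to the symmetric-function estimate $(d+1)\,e_{d+1}/e_d \leq \sum_{i>d}\sigma_i^2 = e_2^2(\mu,d)$, which I would approach by writing $e_{d+1}$ in the decreasing ordering as $\sum_{j>d}\sigma_j^2\,e_d(\sigma_1^2,\ldots,\sigma_{j-1}^2)$ and exploiting the monotonicity $e_d(\sigma_1^2,\ldots,\sigma_{j-1}^2)\leq e_d(\sigma_1^2,\ldots)$.

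The principal obstacle is this last step: a bare use of monotonicity in the telescoping only delivers $(d+1)\,e_{d+1}/e_d \leq (d+1)\sum_{i>d}\sigma_i^2$, losing a factor $(d+1)$ that mirrors the classical constant in the Deshpande--Vempala volume-sampling bound. Extracting the sharp constant asserted in~\eqref{eq:deshpande} will require more refined cancellation that exploits the $e_d$-normalization in the denominator rather than estimating each telescoping summand in isolation.
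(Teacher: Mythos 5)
Your proof of~\eqref{eq:deshpande_more} is correct and is essentially a direct unfolding of what the paper does: the paper passes through the Fubini identity~\eqref{eq:fubini_cDnn}, then invokes the centrally $d$-separated bound~\eqref{eq:lower-bound-center-of-mass} for $\cvolk$ together with the pointwise inequality $\cvolk(X)\leq\cDn(X)$, which reduces to exactly your denominator bound $\int_{H^d_\xcm}\MM_d^2\di\mu^d\leq\diam(\mu)^{2d}$. Nothing to fix there.

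For~\eqref{eq:deshpande}, your moment formula $\int_{H^k_\xcm}\MM_k^2\di\mu^k = k!\,e_k(\sigma_1^2,\sigma_2^2,\ldots)$ is the correct one; it is the paper, not you, that has the gap. The paper's~\eqref{eq:det_goh2} claims
$$\frac{1}{m!}\int_{H^m}\det\bigl(\{k(x_i,x_j)\}_{i,j=1}^m\bigr)\di\mu^m = \int_{H^m_\xcm}\MM_m^2(Y)\di\mu^m(Y),$$
but since $\MM_m^2(Y)$ \emph{is} the Gram determinant $\det(\{k(x_i,x_j)\})$ (parallelotope volume squared, not divided by $m!$), the two sides differ by a factor $m!$, and consequently~\eqref{eq:mult_singulars} should read $\int_{H^{d+1}_\xcm}\MM_{d+1}^2\di\mu^{d+1} = (d+1)!\sum_{t_1<\cdots<t_{d+1}}\sigma_{t_1}^2\cdots\sigma_{t_{d+1}}^2$. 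With that correction the ratio $\int\MM_{d+1}^2/\int\MM_d^2$ equals $(d+1)e_{d+1}/e_d$, and the inequality to prove becomes $(d+1)e_{d+1}/e_d\leq\sum_{j>d}\sigma_j^2$. You should stop looking for the ``refined cancellation'': this inequality is false in general. Take $d=1$ and two nonzero singular values $\sigma_1^2 = 1$, $\sigma_2^2 = \eps$; then $(d+1)e_2/e_1 = 2\eps/(1+\eps)$, which exceeds $\sum_{j>1}\sigma_j^2 = \eps$ whenever $\eps<1$. (Concretely, for $\mu$ uniform on $\{(\pm a,0),(0,\pm b)\}\subset\reals^2$ with $a>b$ one checks directly $\int\cdesh2\di\mu = a^2b^2/(a^2+b^2) > b^2/2 = e_2^2(\mu,1)$.) The factor $d+1$ is the sharp constant in the Deshpande--Vempala volume-sampling bound and cannot be removed; the correct form of~\eqref{eq:deshpande} is $\int_{H^d_\xcm}\cdesh2\di\mu^d\leq (d+1)\,e_2^2(\mu,d)$, which your argument (via the telescoping expansion $e_{d+1}=\sum_{j>d}\sigma_j^2\,e_d(\sigma_1^2,\ldots,\sigma_{j-1}^2)$ and monotonicity of $e_d$) does in fact prove. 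The same factor propagates to the LHS of~\eqref{eq:sin_val_compare} in Theorem~\ref{thm:sin_val_compare}.
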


\begin{proof}
In order to simplify the argument, we introduce a related GCN on $(d+1)$-simplices with a vertex fixed at $\xcm$.
That is we look at $(d+1)$-simplices per~\eqref{simple-note}, i.e.,~$X\in H^{d+1}_\xcm,$ and the GCN $\cDn(X)$, whose square is defined by
\be \label{eq:def_cdnn}\cDnn2(X)=
\frac{\MM^2_{d+1}(X)}{\displaystyle\int_{ H^d_\xcm}\MM^2_{d}(\tilde{Y})\di
\mu^{d} (\tilde{Y})}.
\ee

Per~\eqref{product-formula} and  Fubini's Theorem we see that the corresponding integrals of the
two squared GCNs, $\cdesh2(\tilde{X})$ and $\cDnn2(X)$, are equal, i.e.,
\be
\label{eq:fubini_cDnn}
\int_{H^d_\xcm}\cdesh2(\tilde{X}) \di \mu^{d} (\tilde{X}) = \int_{H^{d+1}_\xcm}
\cDnn2(X) \di \mu^{d+1} (X).\ee
We will thus prove Theorem~\ref{thm:deshpande} with the simpler GCN $\cDn(X)$. Theorem~\ref{thm:sin_val_compare} will immediately follow from our estimates below.

We first note that~\eqref{eq:deshpande_more} follows from~\eqref{eq:lower-bound-center-of-mass} and \eqref{eq:fubini_cDnn},
as well as the following fact:
$$
\cvolk(X) \leq \cDn(X) \ \ \forall X \in H^{d+1}_\xcm.
$$
In order to prove~\eqref{eq:deshpande} we generalize~\cite[Lemma~3.1]{DRVW06} to our continuous setting by proving the identity:
\be \label{eq:mult_singulars}\int_{H^{d+1}_\xcm} \MM^2_{d+1}(X) \di
\mu^{d+1} (X)
= \sum_{1 \leq t_1 < \ldots < t_{d+1}} \sigma^2_{t_1} \cdots
\sigma^2_{t_{d+1}}, \ee
where $\{\sigma_i\}_{i \in \nats}$ are the singular values of the data-to-features operator $A_\mu$.
We obtain~\eqref{eq:mult_singulars} by expanding the expression $\det(I+\lambda A_\mu A^*_\mu)$ in $\lambda\in \reals$ in two different ways and equating the corresponding coefficients.

We first apply~\cite[Theorem~IV.6.1]{trac_det_book} to obtain that
\be
\label{eq:det_goh1}
\det(I+\lambda A_\mu A^*_\mu)=\prod_{j \in \nats}(1+\lambda \sigma_j^2)=1+\sum_{k \in \nats}
\sum_{j_1<\ldots<j_k \in \nats} \sigma_{j_1}^2 \cdots \sigma_{j_k}^2 \cdot \lambda^k.
\ee
Next, in view of~\eqref{eq:adjoint} we express the operator $A_\mu A^*_\mu: L_2(\mu) \to L_2(\mu)$ as follows:
\be
\label{eq:adjoint2}
(A_\mu A^*_\mu f)(y) = \int \langle x-\xcm,\ y-\xcm \rangle f(x)\di \mu (x), \
\mbox{for all } f\in L_2(\mu) \text{ and } y \in H \,
\ee
so that it has the kernel
\be
k(y,x) = \langle x-\xcm,\ y-\xcm \rangle.
\ee
By adapting the proof of~\cite[Theorem~VI.1.1]{trac_det_book}
to the operator $\lambda A_\mu A^*_\mu$ with the kernel
$\lambda k(y,x)$ and the compactly supported measure $\mu$ we obtain that
\begin{multline}
\label{eq:det_goh2}
\det(I+\lambda A_\mu A^*_\mu)=1+\sum_{m\in \nats} \frac{\lambda^m}{m!} \int_{H^m}
\det (\{k(x_i,x_j)\}_{i,j=1}^{m}) \di\mu(x_1) \ldots \di\mu(x_m)=\\
            1+\sum_{m\in \nats} \lambda^m \int_{H^m_\xcm} \MM^2_{m}(Y) \di
\mu^{m} (Y),
\end{multline}
where $H^m_\xcm=\{\xcm\}\times H^m$.
Equation~\eqref{eq:mult_singulars} thus immediately follows from both~\eqref{eq:det_goh1} and~\eqref{eq:det_goh2}.

We will also use the following immediate estimate:
\be \label{eq:desh1}\sum_{1 \leq t_1 < \ldots < t_{d+1}}
\sigma^2_{t_1} \cdots \sigma^2_{t_{d+1}} \leq \sum_{1 \leq t_1 <
\ldots < t_{d}} \sigma^2_{t_1} \cdots \sigma^2_{t_{d}}
\sum_{j=d+1}^\infty \sigma_j^2.  \ee
Now, combining~\eqref{eq:ls_error}, \eqref{eq:mult_singulars}
and~\eqref{eq:desh1}, we get that
$$
\int_{H^{d+1}_\xcm} \MM^2_{d+1}(X) \di \mu^{d+1} (X) \leq \int_{H^{d}_\xcm}
\MM^2_{d}(\tilde{X}) \di \mu^{d} (\tilde{X}) \cdot
e_2^2(\mu,d),$$
that is,
\be \label{eq:cDnn2_final} \nonumber
\int_{H^{d+1}_\xcm} \cDnn2(X) \di \mu^{d+1} (X) \leq
e_2^2(\mu,d)\ee
and combining it with~\eqref{eq:fubini_cDnn} we conclude~\eqref{eq:deshpande}
and thus Theorem~\ref{thm:deshpande}.
\end{proof}



\section{Statistical Relevance of This Work}
\label{sec:stat}

\subsection{Application to the Problem of Clustering Subspaces}
\label{sec:subspace_clustering}

The identity of~\eqref{eq:elementary_identity} is useful for
clustering algorithms based on pairwise distances (see e.g.,
\cite{Brand_clust_unpublished}).
Similarly, the approximate identities of this paper are also useful
for clustering algorithms based on higher-order correlations~\cite{Govindu05,
Agarwal05, Shashua06, spectral_applied, spectral_theory,
Arias-Castro08Surfaces}. The latter algorithms are designed to cluster intersecting subspaces or manifolds, where
the former algorithms fail. For example, the Spectral Curvature Clustering
(SCC)~\cite{spectral_applied, spectral_theory} is an algorithm for
clustering $d$-dimensional affine subspaces. It assigns to any
$d+2$ data points, $x_1$, $\ldots$, $x_{d+2}$, the
affinity, $e^{-\cpols(x_1, \ldots, x_{d+2})/2 \sigma^2}$, where $\cpols$ is the polar GCN and
$\sigma$ is a positive tuning parameter that can be estimated from
the data. It then organizes these affinities
in a matrix whose spectral properties provide the clusters.
We remark that $\cpols$ was referred to in~\cite{spectral_applied, spectral_theory}
as curvature, instead of GCN,
and this resulted in the algorithm's name SCC.

The results of the current paper have been used to justify the SCC algorithm~\cite{spectral_theory}.
More precisely, \cite{spectral_theory} assumed data sampled from a
mixture of subspaces corrupted by sufficiently small noise and
showed that the underlying subspaces could be recovered with
sufficiently large probability and small error. This error was
controlled by  two terms: a sum of
within-clusters  errors scaled by $\sigma^2$ (where $\sigma$
is the tuning parameter used to define the affinities)
and between-clusters interaction. The control of the first term (involving within-clusters errors)
was established by some of the theory proved here. This theory is simpler and more general than the one
referred to in~\cite[Section~2.3]{spectral_theory}.

\subsection{From Estimates in Expectation to Estimates in High Probability}

We extend the comparisons of the two expected quantities (i.e, LS error, which is the expectation of $\dist^2(x,L)$, and the expectation of squared GCNs)
to comparisons of their estimators obtained by i.i.d.~samples from $\mu$.
That is, assume $N$ $H$-valued i.i.d.~random variables drawn from $\mu$, denoted by $\mathfrak{X}_1,\ldots,\mathfrak{X}_N$.
We can estimate the LS error and any of the integrals of squared GCNs (assume for simplicity $\cdls2$) as follows:
\be \label{eq:discrete_error}e_2^2(\mathfrak{X}_1,\ldots,\mathfrak{X}_N;d)=\frac{1}{N} \min_{L \in \agh}\sum_{i=1}^N \dist^2(\mathfrak{X}_{i},L)
\ee
and
\be \label{eq:discrete_curv}\cdls2(\mathfrak{X}_1,\ldots,\mathfrak{X}_N;d)=\frac{1}{N^{d+2}} \sum_{\latop{X=(\mathfrak{X}_{i_1},\ldots,\mathfrak{X}_{i_{d+2}})}{1\leq i_1,\ldots,i_{d+2}\leq N}}\cdls2(X).
\ee
The following theorem shows that these two quantities are comparable to each other with high probability of sampling.
\begin{theorem}
\label{thm:high_prob}
If $\mu$ is $d$-separated (for $\omega$ and $\epsilon$), $\mathfrak{X}_1,\ldots,\mathfrak{X}_N$ are $N$ $H$-valued i.i.d.~random variables drawn from $\mu$,
then for any $0<\delta<1$ and
$$\kappa = \frac{\delta}{(d+2) \cdot \diam(\mu)^2} \, \int_{H^{d+2}}\cdls2(X)\,\di\mu^{d+2}(X),$$
the following estimate holds with probability $1-2\cdot e^{-2\cdot N \cdot \kappa^2}:$
\be \label{eq:discrete_curv2}
\cdls2(\mathfrak{X}_1,\ldots,\mathfrak{X}_N;d) \leq e_2^2(\mathfrak{X}_1,\ldots,\mathfrak{X}_N;d) \leq \frac{1+\delta}{1-\delta}\cdot\frac{1}{\omega^2\cdot\epsilon^{d+1}}\cdot{\cdls2(\mathfrak{X}_1,\ldots,\mathfrak{X}_N;d)}.
\ee
Moreover, for any $0<\delta<\eps$ the following estimate holds with probability $1 - (d+1) \cdot e^{-2N\delta^2}:$
\be \label{eq:discrete_curv3}
\cdls2(\mathfrak{X}_1,\ldots,\mathfrak{X}_N;d) \leq e_2^2(\mathfrak{X}_1,\ldots,\mathfrak{X}_N;d) \leq \frac{1}{\omega^2\cdot(\epsilon-\delta)^{d+1}}
\cdot \cdls2(\mathfrak{X}_1,\ldots,\mathfrak{X}_N;d).
\ee
\end{theorem}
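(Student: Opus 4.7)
The plan is to apply the deterministic results already proved---Proposition~\ref{prop:main-2} and Theorem~\ref{thm:main-1}---to the empirical measure $\mu_N=\frac{1}{N}\sum_{i=1}^{N}\delta_{\mathfrak X_i}$, and to use Hoeffding-type inequalities to handle the randomness in $\mu_N$.

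The lower bounds $\cdls2(\mathfrak X_1,\ldots,\mathfrak X_N;d)\leq e_2^2(\mathfrak X_1,\ldots,\mathfrak X_N;d)$ in both \eqref{eq:discrete_curv2} and \eqref{eq:discrete_curv3} are immediate from Proposition~\ref{prop:main-2} applied to $\mu_N$, and involve no randomness since that proposition holds for every Borel probability measure on $H$.

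The upper bound in \eqref{eq:discrete_curv3} comes from transferring the $d$-separation of $\mu$ to $\mu_N$. Let $V_0,\ldots,V_d$ be witness sets for $\mu$ satisfying \eqref{eq:d_sep1}--\eqref{eq:d_sep3}. Applying the one-sided Hoeffding inequality to the $[0,1]$-valued i.i.d.~variables $\mathbb 1_{V_i}(\mathfrak X_1),\ldots,\mathbb 1_{V_i}(\mathfrak X_N)$ gives $P(\mu_N(V_i)<\epsilon-\delta)\leq e^{-2N\delta^2}$, and a union bound over the $d+1$ sets yields $\mu_N(V_i)\geq\epsilon-\delta$ for every $i$ simultaneously with probability at least $1-(d+1)e^{-2N\delta^2}$. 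Because $\supp(\mu_N)\subseteq\supp(\mu)$ and $\diam(\mu_N)\leq\diam(\mu)$, the simplex-volume condition \eqref{eq:d_sep3} for $\mu$ immediately implies the same condition for $\mu_N$ with the same $\omega$. Hence on this event $\mu_N$ is itself $d$-separated with constants $\omega$ and $\epsilon-\delta$; Theorem~\ref{thm:main-1} applied to $\mu_N$, combined with the pointwise inequality $\cvolker\leq C_d\cdot\cdls2$ from \eqref{eq:control_vold_below1}, yields~\eqref{eq:discrete_curv3}.

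For~\eqref{eq:discrete_curv2} we additionally concentrate $\cdls2(\mathfrak X_1,\ldots,\mathfrak X_N;d)$ around $\int\cdls2\di\mu^{d+2}$. Viewed as a V-statistic of order $d+2$ with symmetric kernel $\cdls2(X)\in[0,\diam(\mu)^2]$, replacing one $\mathfrak X_j$ alters at most $(d+2)N^{d+1}$ summands, each by at most $\diam(\mu)^2$; thus the bounded-differences coefficient is $(d+2)\diam(\mu)^2/N$, and McDiarmid's inequality gives $P\bigl(|\cdls2(\mathfrak X_1,\ldots;d)-\int\cdls2\di\mu^{d+2}|\geq\kappa\bigr)\leq 2e^{-2N\kappa^2}$ for the $\kappa$ in the statement. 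Combined with Theorem~\ref{thm:main-1} applied to $\mu$ itself (using the original constants $\omega,\epsilon$) and a parallel Hoeffding bound on $\frac{1}{N}\sum\dist^2(\mathfrak X_i,L^\ast)$ with $L^\ast$ a LS $d$-flat of $\mu$ (to transfer $e_2^2(\mu,d)$ to $e_2^2(\mu_N,d)$), the two resulting two-sided estimates multiply to produce the $(1+\delta)/(1-\delta)$ factor appearing in~\eqref{eq:discrete_curv2}.

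I expect the main technical point to be the V-statistic concentration: one must verify that the bounded-differences coefficient truly carries a factor $(d+2)$ rather than $(d+2)!$ or similar, which amounts to counting carefully the summands affected by a single-coordinate change and using symmetry of the kernel. Everything else reduces either to Proposition~\ref{prop:main-2}/Theorem~\ref{thm:main-1} or to routine scalar Hoeffding inequalities.
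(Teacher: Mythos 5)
Your plan follows the paper's proof step for step: the deterministic lower bound via Proposition~\ref{prop:main-2} applied to the empirical measure, transfer of $d$-separation to $\mu_N$ via scalar Hoeffding on the indicators of $V_0,\ldots,V_d$ and a union bound for~\eqref{eq:discrete_curv3}, and for~\eqref{eq:discrete_curv2} a McDiarmid bound on the V-statistic $\cdls2(\mathfrak X_1,\ldots,\mathfrak X_N;d)$ (with bounded-differences coefficient $(d+2)\diam(\mu)^2/N$) paired with a Hoeffding bound on $\frac1N\sum\dist^2(\mathfrak X_i,\widehat L)$ for a fixed LS $d$-flat $\widehat L$ of $\mu$, combined with Theorem~\ref{thm:main-1} for $\mu$. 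One small slip: in the McDiarmid step the deviation threshold should be $\beta=\delta\int\cdls2\di\mu^{d+2}=\kappa\,(d+2)\diam(\mu)^2$, not $\kappa$ itself---$\kappa$ is what emerges in the exponent $e^{-2N\kappa^2}$ after dividing $\beta$ by the bounded-differences coefficient, exactly as the paper's equations~\eqref{eq:mcdiarmid0_2}--\eqref{eq:mcdiarmid2} record.
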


\begin{proof}
The LHS inequality of both~\eqref{eq:discrete_curv2} and~\eqref{eq:discrete_curv3} is proved identically to~\eqref{eq:prop_main-2} and in fact is a deterministic inequality.

We first verify the RHS inequality of~\eqref{eq:discrete_curv2} by estimating with probability the integral quantities by their discrete counterparts (via concentration inequalities). In order to estimate the integral of $\cdls2$ by $\cdls2(\mathfrak{X}_1,\ldots,\mathfrak{X}_N;d)$ we fix $1 \leq i\leq  N$ and note that the number of additive terms in
$\cdls2(\mathfrak{X}_1,\ldots,\mathfrak{X}_N;d)$
that contain $\mathfrak{X}_i$ is $(d+2)\cdot\perm{N-1}{d+1}$,
where $\perm{N-1}{d+1}$ denotes the permutations of $d+1$ elements out of $N-1$. Moreover, each of these terms is between 0 and $\diam(\mu)^2/N^{d+2}$. Consequently,
\be\label{eq:mcdiarmid}\sup_{\mathfrak{X}_1,\ldots,\mathfrak{X}_N,\widehat{\mathfrak{X}}_i}
|\cdls2(\mathfrak{X}_1,\ldots,\mathfrak{X}_i,
\ldots,\mathfrak{X}_N;d)-
\cdls2(\mathfrak{X}_1,\ldots,\widehat{\mathfrak{X}}_i,
\ldots, \mathfrak{X}_N;d)| \leq (d+2)\cdot \diam(\mu)^2/N.\ee
Applying McDiarmid's inequality~\cite{McDiarmid89} with the underlying condition expressed in~\eqref{eq:mcdiarmid} we obtain that for any $\beta>0$:
\begin{align}
\mu^N\left(\int_{H^{d+2}}\cdls2(X)\di\mu^{d+2}(X) - \cdls2(\mathfrak{X}_1,\ldots,\mathfrak{X}_N;d) \geq
\beta\right)
& \leq e^{-2N\beta^2/((d+2)^2\cdot \diam(\mu)^4)}.
\label{eq:mcdiarmid0_2}\end{align}
Setting
\be \label{eq:delta_by_c} \beta=\delta \, \int_{H^{d+2}}\cdls2(X)\di\mu^{d+2}(X),\ee
we rewrite~\eqref{eq:mcdiarmid0_2} as follows:
\begin{align}
\mu^N\left(\cdls2(\mathfrak{X}_1,\ldots,\mathfrak{X}_N;d)\leq (1-\delta) \, \int_{H^{d+2}}\cdls2(X)\di\mu^{d+2}(X)\right)
& \leq e^{-2N\beta^2/((d+2)^2\cdot \diam(\mu)^4)}.
\label{eq:mcdiarmid2}\end{align}

In order to estimate $e_2(\mu,d)$ by $e_2(\mathfrak{X}_1,\ldots,\mathfrak{X}_N;d)$ we note that
\be
e_2^2(\mathfrak{X}_1,\ldots,\mathfrak{X}_N;d)\leq \frac{1}{N} \sum_{i=1}^N \dist^2(\mathfrak{X}_{i},\widehat{L}), \label{eq:bound_e}\ee
where $\widehat{L}$ is a fixed LS $d$-flat for $\mu$.
Applying Hoeffding's inequality to the function on the RHS of~\eqref{eq:bound_e}, we obtain that
\begin{align}
\mu^N\left(\frac{1}{N} \sum_{i=1}^N \dist^2(\mathfrak{X}_{i},\widehat{L})-e_2^2(\mu,d)\geq
\beta\right)
& \leq e^{-2N\beta^2/\diam(\mu)^4}.
\label{eq:hoeffding_0}
\end{align}
By further use of~\eqref{eq:prop_main-2} and~\eqref{eq:delta_by_c}, we reduce~\eqref{eq:hoeffding_0} to the following probabilistic inequality:
\begin{align}
\mu^N\left( \frac{1}{N} \sum_{i=1}^N \dist^2(\mathfrak{X}_{i},\widehat{L}) \geq (1+\delta) \cdot e_2^2(\mu,d) \right)
& \leq e^{-2N\beta^2/\diam(\mu)^4}.
\label{eq:hoeffding}
\end{align}
The RHS inequality of~\eqref{eq:discrete_curv2} thus follows from the combination of~\eqref{eq:thm_main-1} \eqref{eq:mcdiarmid2}, \eqref{eq:bound_e} and~\eqref{eq:hoeffding}.

Next, we prove the RHS inequality of~\eqref{eq:discrete_curv3} by showing that $d$-separation of $\mu$ is maintained (for $\omega$ and $\eps'$, where $\eps'<\eps$) with overwhelming probability by i.i.d.~random variables sampled from $\mu$. We arbitrarily fix $j=1,\ldots,d+1$ and $i=1,\ldots,N$ and form the random variable $\mathfrak{I}_{i,j}$ by the formula:
$$\mathfrak{I}_{i,j}(x) = I_{\mathfrak{X}_i \in V_j}(x) \ \text{ for all } x \in H,$$
where $I$ is indicator function and $\{V_j\}_{j=1}^{d+1}$ are the sets used in defining the $d$-separation of $\mu$.
We note that
$$\int \mathfrak{I}_{i,j}(x)\di \mu(x) = \mu(V_j) > \eps.$$
Combining this observation with Hoeffding's inequality we obtain that
\begin{align*}
\mu^N\left(-\sum_{i=1}^N \mathfrak{I}_{i,j}/N+\eps\geq
\delta\right)
\leq
\mu^N\left(-\sum_{i=1}^N \mathfrak{I}_{i,j}/N+\mu(V_j)\geq
\delta\right)
& \leq e^{-2N\delta^2}.
\end{align*}
Consequently,
\begin{align*}
\mu^N\left(\bigcap_{j=1}^{d+1} \left(\sum_{i=1}^N \mathfrak{I}_{i,j}/N > \eps - \delta \right) \right)
& \geq 1 - (d+1) \cdot e^{-2N\delta^2}.
\end{align*}
That is, with probability $1 - (d+1) \cdot e^{-2N\delta^2}$ the  empirical measure $\mu_N(A)=\sum_{i=1}^N I_A(\mathfrak{X}_i)/N$ is $d$-separated for the parameters $\omega$ and $\eps-\delta$ and  the same sets $\{U_j\}_{j=1}^{d+1}$, $\{V_j\}_{j=1}^{d+1}$ .
For each such instance of $d$-separation of the empirical measure, we apply  Theorem~\ref{thm:main-1} to $\mu_N$.  That is, for a fixed sample $\mathfrak{X}_1,\ldots,\mathfrak{X}_N$ whose empirical measure $\mu_N$ is $d$-separated with these sets and constants $\omega$ and $\eps-\delta$, we have the following inequality which is simply Theorem~\ref{thm:main-1} applied to $\mu_N$:
\be \nonumber \label{eq:discrete_curv_for_samples}
e_2^2(\mathfrak{X}_1,\ldots,\mathfrak{X}_N;d) \leq \frac{1}{\omega^2\cdot(\eps-\delta)^{d+1}}
\cdot \cdls2(\mathfrak{X}_1,\ldots,\mathfrak{X}_N;d).
\ee
This inequality holds for all samples with probability $1 - (d+1) \cdot e^{-2N\delta^2}$ and the RHS inequality of~\eqref{eq:discrete_curv3} is thus concluded.
\end{proof}

\section{Discussion}
\label{sec:discuss}
We presented examples of $d$-dimensional geometric condition numbers whose integrals are comparable to the $d$-dimensional least squares error for certain classes of measures. We related these results to the problem of clustering subspaces and to volume-based sampling for Monte-Carlo SVD.
We discuss here further implications and open directions.

\subsection{Comparisons of $L_p$ Errors}

For simplicity we only discussed LS errors, i.e.,
$L_2$ errors. Nevertheless, $L_p$ errors for $1 \leq p < \infty$ can also
be estimated using $p$-th powers of the GCNs.

\subsection{Approximate Identities for Singular Values}
Some of the approximate identities established in this paper can be translated to
approximate identities involving singular values of certain operators. We
exemplify this claim for the data-to-features operator as follows.
\begin{theorem}\label{thm:sin_val_compare}
If $\mu$ is centrally $d$-separated (for $\omega$ and $\epsilon$) with compact support
and $\{\sigma_i\}_{i \in \nats}$
are the singular values of the data-to-features operator, then
\be \label{eq:sin_val_compare} \omega^2 \cdot \epsilon^{d} \,
\sum_{j=d+1}^\infty \sigma_j^2
\leq  \frac{\sum_{1 \leq t_1 < \ldots < t_{d+1}}
\sigma^2_{t_1} \cdots \sigma^2_{t_{d+1}} }{\sum_{1 \leq t_1 < \ldots
< t_{d}} \sigma^2_{t_1} \cdots \sigma^2_{t_{d}}  } \leq
\sum_{j=d+1}^\infty \sigma_j^2.  \ee
\end{theorem}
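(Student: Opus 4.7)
The strategy is to recognize that the middle ratio in the claimed inequality is precisely the quantity $\int_{H^d_{\xcm}}\cdesh2(\tilde{X}) \di \mu^{d} (\tilde{X})$ studied in Theorem~\ref{thm:deshpande}, once we rewrite both numerator and denominator via the singular-value identity~\eqref{eq:mult_singulars}. The two inequalities then fall out by substituting the Proposition~\ref{prop:pca} formula $e_2^2(\mu,d)=\sum_{j>d}\sigma_j^2$ into the two halves of Theorem~\ref{thm:deshpande}.

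More explicitly, the first step is to apply identity~\eqref{eq:mult_singulars} to both the $d+1$-dimensional and $d$-dimensional parallelotope volume integrals:
\begin{equation*}
\int_{H^{d+1}_\xcm}\MM_{d+1}^2(X)\di\mu^{d+1}(X)=\sum_{1\leq t_1<\cdots<t_{d+1}}\sigma_{t_1}^2\cdots\sigma_{t_{d+1}}^2,
\end{equation*}
\begin{equation*}
\int_{H^d_\xcm}\MM_d^2(\tilde Y)\di\mu^d(\tilde Y)=\sum_{1\leq t_1<\cdots<t_d}\sigma_{t_1}^2\cdots\sigma_{t_d}^2.
\end{equation*}
(The second follows from~\eqref{eq:mult_singulars} applied with $d$ in place of $d+1$; alternatively it is the coefficient of $\lambda^d$ in the determinantal expansion~\eqref{eq:det_goh2}.) Dividing these and using~\eqref{eq:fubini_cDnn} together with the definition~\eqref{eq:def_cdnn} of $\cDnn2$ identifies the middle quantity in~\eqref{eq:sin_val_compare} as
\begin{equation*}
\int_{H^d_\xcm}\cdesh2(\tilde X)\di\mu^d(\tilde X)=\int_{H^{d+1}_\xcm}\cDnn2(X)\di\mu^{d+1}(X).
\end{equation*}

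The second step is to invoke Proposition~\ref{prop:pca}, which gives $e_2^2(\mu,d)=\sum_{j>d}\sigma_j^2$, so that the outer bounds in~\eqref{eq:sin_val_compare} become $\omega^2\epsilon^d e_2^2(\mu,d)$ on the left and $e_2^2(\mu,d)$ on the right. The lower bound in~\eqref{eq:sin_val_compare} is then a direct restatement of~\eqref{eq:deshpande_more} under central $d$-separation, and the upper bound in~\eqref{eq:sin_val_compare} is a direct restatement of~\eqref{eq:deshpande}, which holds for any Borel probability measure on $H$.

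There is essentially no obstacle: everything is assembled from results already in the paper. The only very mild subtlety is to verify that the identity~\eqref{eq:mult_singulars}, which was stated for $d+1$, really applies at level $d$ as well; this is transparent from the proof, since the index $d+1$ plays no special role in expanding $\det(I+\lambda A_\mu A_\mu^*)$ — the coefficient of every power $\lambda^m$ equals both the elementary symmetric function of $\{\sigma_j^2\}$ of degree $m$ (from~\eqref{eq:det_goh1}) and the integral $\int_{H^m_\xcm}\MM_m^2(Y)\di\mu^m(Y)$ (from~\eqref{eq:det_goh2}). Once this is noted, the theorem is immediate.
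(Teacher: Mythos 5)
Your proof is correct and mirrors the paper's own argument: the paper also cites Theorem~\ref{thm:deshpande} together with the identifications \eqref{eq:def_cdnn}--\eqref{eq:mult_singulars} for the LHS, and characterizes the RHS as an elementary algebraic inequality for any $\{\sigma_i\}$ — which is precisely \eqref{eq:desh1}, the same estimate underlying your invocation of \eqref{eq:deshpande}. Your remark about \eqref{eq:mult_singulars} extending verbatim to level $d$ via coefficient-matching in $\det(I+\lambda A_\mu A_\mu^*)$ fills in the only point the paper leaves implicit.
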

We note that the inequality on the RHS of~\eqref{eq:sin_val_compare}
is trivial for any set of numbers $\{\sigma_i\}_{i \in \nats}$.
The LHS comparability is an immediate corollary of Theorem~\ref{thm:deshpande}
(in view of~\eqref{eq:def_cdnn}-\eqref{eq:mult_singulars}).

\subsection{More Robust Notion of $d$-Separation}

Our notion of $d$-separation is not sufficiently ``robust to outliers'' since it depends on $\diam(\mu)$.
Assume, e.g., a probability measure which is a mixture of one component supported in the unit ball and another component of an atomic measure
supported on an arbitrarily far point with a sufficiently small weight. The diameter of this measure is mainly determined by the outlying atomic measure. However, for $X\in H^d_\xcm$ and the GCN $\cdes(X)$ (or $X\in H^{d+1}_\xcm$ and the GCN $\cDn(X)$) we can weaken the effect of outliers by replacing the condition
$\MM_d(X) \geq \omega\cdot \diam(\mu)^d$ with
\be
\MM^2_d(X) \geq \omega
\int_{H^d_\xcm}\MM^2_{d}(\tilde{Y})\di \mu^{d} (\tilde{Y})\,.
\ee

\subsection{On $d$-Separation w.r.t.~$(d+1)$-Simplices and Its Implications}\label{app:strong_d_sep}
A different notion of $d$-separation was previously used in the setting of $d$-regular measures on $H$~\cite{Leger99, LW-part2}.
It is based on $d$-separation of $(d+1)$-simplices (instead of $d$-simplices).
We adapt this notion to the current setting and explain its relation with $d$-separation defined here, we also describe its implications.

We say that a $(d+1)$-simplex  $X=(x_0,\ldots,x_{d+}) \in
\supp(\mu)^{d+2}$ is $d$-separated (for $\omega$) if all of its faces are $d$-separated as $d$-simplices (for $\omega$).
That is,
\begin{equation}\min_{0\leq i\leq d+1}\MM_d(X(i))\geq\omega\cdot
\diam(\mu)^d.\end{equation}
We say that $\mu$ is $d$-separated w.r.t.~$(d+1)$-simplices  (with positive constants
$\omega$, $\epsilon$ and $\tau$) if there exist sets $ V_i\subseteq
U_i \subseteq \supp(\mu)$, $0\leq i\leq d+1,$ such that for each
$0\leq i\leq d+1$:
\begin{enumerate}
\item \label{enum:d_sep1}$\mu( V_i
)\geq\epsilon$.
\item \label{enum:d_sep2}$\dist_\mu( V_i, U_i^c) := \inf_{\substack{x\in V_i\cap\Supp\\y\in U_i^c\cap\Supp}}\|x-y\|\geq\tau \cdot \diam(\mu)$.
\item \label{enum:d_sep3}
$\prod_{i=0}^{d+1}
 U_i\subseteq \left\{X\in \supp(\mu)^{d+2}:\min_{0\leq i\leq
d+1}\MM_d(X(i))\geq\omega\cdot \diam(\mu)^d\right\}$ .
\end{enumerate}

In view of Lemma~\ref{lem:either-or} and its proof $d$-separation is almost identical to $d$-separation w.r.t.~$(d+1)$-simplices. The typical example of a $d$-separated measure which is not $d$-separated with respect to $(d+1)$-simplices is a measure supported on $d+1$ atoms with positive $d$-volume. One can add another part of the support lying on a $(d-1)$-flat containing $d$ of these atoms and provide this way additional examples.

Nevertheless, the extra care taken in defining $d$-separation w.r.t.~$d$-simplices is necessary in formulating the following stronger version of Theorem~\ref{thm:main-1}, which  restricts the integral of $\cvolker(X)$ to the following set of simplices with sufficiently large edge lengths (with respect to $\tau$):
\begin{equation}\nonumber \label{equation:large-edges}LE_{\tau}(\mu)=\left\{X\in \supp(\mu)^{d+2}:\min(X)\geq\tau\cdot
\diam(\mu)\right\}.\end{equation}
\begin{theorem}\label{thm:main_modified}If $\mu$ is $d$-separated (for $\omega$, $\epsilon$ and $\tau$) w.r.t.~$(d+1)$-simplices,
then
\be \nonumber \label{eq:theorem_main}e_2^2(\mu,d)\leq
\frac{4}{\omega^2\cdot\epsilon^{d+1}} \left(1+4\cdot(d+1)^2+\frac{4\cdot(d+1)}
{\omega^2\cdot\epsilon} \right)
\, \int_{LE_\tau(\mu)}\cvolker(X)\di\mu^{d+2}(X).\ee
\end{theorem}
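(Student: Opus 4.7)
The plan is to adapt the Chebyshev-selection argument from the proof of Theorem~\ref{thm:main-1} while keeping track of the constraint that only simplices in $LE_\tau(\mu)$ contribute to the controlling integral. The stronger separation hypothesis supplies two additional ingredients. First, condition~\ref{enum:d_sep2} implies that the sets $V_i$ are pairwise $\tau\diam(\mu)$-separated (since $V_j \subseteq U_i^c$ for $i\neq j$), so for any $\widetilde{X}(d+1)=(\widetilde{x}_0,\ldots,\widetilde{x}_d)\in\prod_{i=0}^d V_i$ all face edges $\|\widetilde{x}_i-\widetilde{x}_j\|$, $i\neq j$, automatically exceed $\tau\diam(\mu)$. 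Second, condition~\ref{enum:d_sep3} still yields $\MM_d(\widetilde{X}(d+1))\geq\omega\diam(\mu)^d$, so inequality~\eqref{eq:matador} remains valid.

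First I would apply Chebyshev's inequality, exactly as in~\eqref{eq:simple_cheb1}--\eqref{eq:simple_cheb2}, to select a $\widetilde{X}(d+1)\in\prod_{i=0}^d V_i$ satisfying an integrated bound of the form $\int_H \cvolker(\widetilde{X}(y,d+1))\di\mu(y)\lessapprox \int_{H^{d+2}}\cvolker(X)\di\mu^{d+2}(X)$ together with an auxiliary Chebyshev condition tuned to the bad region (described below). I then split
\begin{equation*}
e_2^2(\mu,d)\leq \int_{H\setminus B}\dist^2(y,L[\widetilde{X}(d+1)])\di\mu(y)+\int_B \dist^2(y,L[\widetilde{X}(d+1)])\di\mu(y),
\end{equation*}
where $B:=\bigcup_{i=0}^d B(\widetilde{x}_i,\tau\diam(\mu))$. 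For $y\in H\setminus B$, the edges $\|y-\widetilde{x}_i\|$ also exceed $\tau\diam(\mu)$, so $\widetilde{X}(y,d+1)\in LE_\tau(\mu)$ and combining~\eqref{eq:matador} with the Chebyshev bound yields the ``main'' contribution, matching the additive $1$ inside the parentheses of the theorem's constant.

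The substantive new work is controlling the bad-region integral over $B$. Since $\widetilde{x}_i\in L[\widetilde{X}(d+1)]$, one has the pointwise bound $\dist(y,L[\widetilde{X}(d+1)])\leq \|y-\widetilde{x}_i\|\leq\tau\diam(\mu)$ on $B(\widetilde{x}_i,\tau\diam(\mu))$, but this only yields an additive estimate in $\tau^2\diam^2(\mu)$ and fails to produce a multiplicative comparison with $\int_{LE_\tau(\mu)}\cvolker\di\mu^{d+2}$. To remedy this I would perform a second Chebyshev selection: pick auxiliary vertices $\widetilde{x}_i'\in V_i$ so that substituting $\widetilde{x}_i'$ for $\widetilde{x}_i$ produces a simplex $\widetilde{X}'(y,d+1)\in LE_\tau(\mu)$ whenever $y\in B(\widetilde{x}_i,\tau\diam(\mu))$. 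A triangle inequality then decomposes $\dist^2(y,L[\widetilde{X}(d+1)])$ into a contribution controlled by $\cvolker(\widetilde{X}'(y,d+1))$ (supported in $LE_\tau(\mu)$, and absorbed via the second Chebyshev with an extra factor $1/(\omega^2\eps)$) and a contribution measuring the ``twist'' between the two nearby $d$-flats $L[\widetilde{X}(d+1)]$ and $L[\widetilde{X}'(d+1)]$. Summing over $i\in\{0,\ldots,d\}$ produces the remaining additive terms $4(d+1)^2$ and $4(d+1)/(\omega^2\eps)$ in the stated constant, while the outer factor of $4$ comes from absorbing factors of $2$ incurred along the way.

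The main obstacle is precisely this bad-region estimate. The distance $\dist(y,L[\widetilde{X}(d+1)])$ on $B(\widetilde{x}_i,\tau\diam(\mu))$ cannot be controlled by $\cvolker$ of any single simplex built from $\widetilde{X}(d+1)$ alone, because one vertex coincides with $\widetilde{x}_i$ and forces the associated GCN to degenerate; a genuinely second probabilistic argument using a distinct auxiliary simplex and a careful comparison of two nearby $d$-flats is unavoidable. Keeping the constants sharp enough to match the exact form of the prefactor in the theorem's statement is where the bulk of the technical bookkeeping lies.
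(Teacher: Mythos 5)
The paper does not actually prove Theorem~\ref{thm:main_modified}: it only remarks that the proof follows that of Theorem~1.1 in the cited reference (\cite{LW-part2}), so there is no in-text proof to match against. On its own terms, however, your proposal has a concrete gap at exactly the step you flag as the main obstacle.

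The problem is the claimed second Chebyshev selection. For $y\in B(\widetilde{x}_i,\tau\diam(\mu))\cap\Supp$, condition~\ref{enum:d_sep2} forces $y\in U_i$ (since $\widetilde{x}_i\in V_i$ and $\dist_\mu(V_i,U_i^c)\geq\tau\diam(\mu)$). But any replacement vertex $\widetilde{x}_i'\in V_i\subseteq U_i$ also lies in $U_i$, and there is no lower bound whatever on distances between two points of $U_i$. Hence the edge $\|y-\widetilde{x}_i'\|$ of the proposed simplex $\widetilde{X}'(y,d+1)$ can be arbitrarily small, and membership in $LE_\tau(\mu)$ fails. This is not a bookkeeping issue: the substitution you describe simply does not produce a simplex in the restricted domain of integration. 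Beyond that, the ``twist'' between $L[\widetilde{X}(d+1)]$ and $L[\widetilde{X}'(d+1)]$ is asserted to be controllable but no mechanism is given, and a quantitative comparison of two best-fit $d$-flats through nearby vertex tuples is itself a nontrivial lemma (this is presumably where the $4(d+1)^2$ and the extra factors of $\omega^2\epsilon$ come from in \cite{LW-part2}, not from the route you sketch).

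A second issue is that your argument never uses the $(d+2)$-th pair $V_{d+1}\subseteq U_{d+1}$ that the $(d+1)$-simplex $d$-separation hypothesis provides; you work with $\prod_{i=0}^d V_i$ only. The whole point of strengthening $d$-separation to $(d+1)$-simplex separation is that for $(\widetilde{x}_0,\ldots,\widetilde{x}_d)\in\prod_{i=0}^d V_i$ and $y\in\Supp\setminus\bigcup_{i=0}^d U_i$, the simplex $\widetilde{X}(y,d+1)$ automatically lands in $LE_\tau(\mu)$; the ``bad region'' is then $\bigcup_{i=0}^d U_i$ rather than the $\tau$-balls, and the extra sets $U_{d+1},V_{d+1}$ together with condition~\ref{enum:d_sep3} control the face volumes needed to run a comparison there. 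Without engaging this structure it is not possible to make the bad-region contribution both restricted to $LE_\tau(\mu)$ and multiplicative, which is precisely what stalls your argument.
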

The proof of this theorem follows the one of~\cite[Theorem~1.1]{LW-part2}.
This type of control was necessary in~\cite{Leger99, LW-part2} since singular curvature functions were used instead of GCNs and they
had to be further integrated along various ``scales'' $t$ w.r.t.~the measure $\di t/t$.
Clearly, it is not necessary in the current context.

\subsection{Extension to Metric Spaces}
It will be interesting to extend some of our results to metric spaces. In particular, by choosing appropriate metric GCNs one can obtain a corresponding notion of an approximate best-fit subspace. This task is considered in~\cite{LZ12} for the purpose of clustering $d$-dimensional smooth structures in metric spaces.

\bibliographystyle{plain}
\bibliography{refs_4_04_11}

\end{document}